\newtheorem{thm}{Theorem}[section]
\newtheorem{cor}[thm]{Corollary}
\newtheorem{prop}[thm]{Proposition}
\theoremstyle{definition}
\newtheorem{defn}[thm]{Definition}
\numberwithin{equation}{section}
\newcommand{\Real}{\mathbb R}
\newcommand{\To}{\longrightarrow}
\newcommand{\A}{\mathcal{A}}
\newcommand{\punt}{\mathbf{.}}
\newcommand{\ibs}{\boldsymbol{i}}
\newcommand{\varbs}{\boldsymbol{\varepsilon}}
\newcommand{\jbs}{\boldsymbol{j}}
\newcommand{\lbs}{\boldsymbol{l}}
\newcommand{\kbs}{\boldsymbol{k}}
\newcommand{\zbs}{\boldsymbol{z}}
\newcommand{\mbs}{\boldsymbol{m}}
\newcommand{\xbs}{\boldsymbol{x}}
\newcommand{\vbs}{\boldsymbol{v}}
\newcommand{\deltabs}{\boldsymbol{\delta}}
\newcommand{\X}{\boldsymbol{X}}
\newcommand{\mubs}{\boldsymbol{\mu}}
\newcommand{\nubs}{\boldsymbol{\nu}}
\newcommand{\gabs}{\boldsymbol{\gamma}}
\newcommand{\debs}{\boldsymbol{\delta}}
\newcommand{\lambs}{\boldsymbol{\lambda}}
\newcommand{\chibs}{\boldsymbol{\chi}}
\newcommand{\bio}{\boldsymbol{\iota}}
\newcommand{\boeta}{\boldsymbol{\eta}}
\newcommand{\trasp}{\scriptscriptstyle{T}}
\begin{document}
\title{Multivariate time-space harmonic polynomials: a symbolic approach.}
\author{E. Di Nardo \footnote{Department of Mathematics, Computer Sciences and Economics,
University of Basilicata, Viale dell'Ateneo Lucano 10, 85100 Potenza, Italy, elvira.dinardo@unibas.it}, I.
Oliva \footnote {Department of Economics, University of Verona, Via dell'Artigliere 19, 37129 Verona, Italy, 
immacolata.oliva@univr.it}}
\date{\today}
\maketitle
\begin{abstract}
By means of a symbolic method, in this paper we introduce a new family of multivariate polynomials such that 
multivariate L\'evy processes can be dealt with as they were martingales. In the univariate case, this family of polynomials is known as time-space harmonic polynomials. Then, simple closed-form expressions of some multivariate classical families of polynomials are given. The main advantage of this symbolic representation is the plainness of  the setting which reduces to few fundamental statements but also of its implementation  in any symbolic software. The role played by cumulants is emphasized within the generalized Hermite
polynomials. The new class of multivariate L\'evy-Sheffer systems is introduced.
\end{abstract}
\textsf{\textbf{keywords}: umbral calculus, multivariate L\'evy process, multivariate time-space harmonic polynomial, cumulant}
%------------------------------------------------------------------------------------
\section{Introduction}
%------------------------------------------------------------------------------------
In mathematical finance, a multivariate stochastic process $\X_t$ in ${\mathbb R}^d$ usually models the price process of hedging portfolios at time $t.$ 
The tools employed to work with $\X_t$ are either its probability distribution 
or its characteristic function. When the probability distribution of $\X_t$ is known analytically, numerical 
algorithms are resorted aiming to compute the contingent claim's price $E[\varphi(\X_t)]$ with $\varphi$ some payoff function. 
When the characteristic function of $\X_t$ is known analytically, fast Fourier transform or Monte Carlo methods are applied to 
multivariate integrals 
$$E[\varphi(\X_t)] = \int_{{\mathbb R}^d} \hat{\varphi}(\boldsymbol{u}) \, E[\exp(i \boldsymbol{u} \X_t)] \, d \boldsymbol{u},$$
with $\hat{\varphi}$ the Fourier transform of $\varphi.$ Both methodologies require implementations which can take some time
due to the involved multivariate integrals.  

Recently \cite{cuchiero}, a class of processes, called \emph{polynomial processes}, has been introduced
as follows: assume $S$ the state space of $\X_t,$ a 
closed subset of ${\mathbb R}^d.$ Then there exists a multivariate polynomial $Q(\xbs,t)$ in 
\begin{equation}
{\hbox{Pol}}_{\leq m} (S) = \left\{ \left. \sum_{|\kbs|=0}^m c_{\kbs} \xbs^{\kbs} \right| \xbs \in S, c_{\kbs} \in {\mathbb R}\right\}
\label{spaziopol}
\end{equation}
such that a martingale property holds \footnote{In (\ref{spaziopol}), by using the multi-index notation $\xbs^{\kbs} = x_1^{k_1} x_2^{k_2} \cdots x_d^{k_d}$ and $c_{\kbs} = c_{k_1, k_2, \ldots, k_d}.$  Recall that a ($d$-dimensional) multi-index $\kbs$ is a $d$-tuple $\kbs = (k_1, \ldots, k_d)$ of nonnegative integers, such 
that $|\kbs| = k_1 + \cdots + k_d,$ and $\kbs! = k_1! \cdots k_d!.$}
\begin{equation}
E[Q(\X_t,t) | \X_s] = Q(\X_s,s)
\label{martingala}
\end{equation}
for $s \leq t.$ The martingales $\{Q(\X_t,t)\}$ are called polynomial processes.  The polynomials (\ref{spaziopol}) originated in conjunction with random matrix theory and multivariate statistics of ensembles. 
These processes are employed together with the reduction--variance method for the pricing and the hedging of some bounded measurable European claims. In \cite{cuchiero}, the attention is essentially focused on the properties shared by the class of stochastic processes $Q(\X_t,t),$ as for example affine processes or Feller processes with quadratic squared diffusion coefficients.  The computation of their coefficients requires the computation of a matrix exponential\footnote{If $X$ is a real or complex $n\times n$ matrix, the {\it matrix exponential} of $X$ is a $n \times n$ matrix $e^X$ whose power series is $e^X = \sum_{k \geq 0} X^k/k!.$}. In order to characterize the polynomials $Q$, Haar measure and zonal polynomials are involved. 
A computational efficient way to deal with zonal polynomials is not yet available: in particular, as coefficients of hypergeometric functions, zonal polynomials have manageable expressions only on the unitary group.  In the univariate case, the polynomials (\ref{spaziopol}) have been deeply analyzed by different authors, see \cite{solè} and references therein. They are called \emph{time-space harmonic} polynomials. For
L\'evy processes $\X_t,$ the main advantage of employing the polynomial process $Q(\X_t,t)$ is the martingale property 
(\ref{martingala}), fundamental in the martingale pricing \cite{PDE}, which not necessarily holds for L\'evy processes. 

In order to characterize multivariate time-space harmonic polynomials, we propose the multivariate Fa\`a di Bruno formula as main tool. In the univariate case, the Fa\`a di Bruno gives the $m$-th derivative of a composite function,
that is, if $f$ and $g$ are functions with a sufficient number of derivatives, then
$$\frac{g^{(m)}[f(t)]}{m!} = \sum \frac{g^{(k)}[f(t)]}{b_1! b_2! \dots b_m!} 
\left(\frac{f^{(1)}(t)}{1!}\right)^{b_1} \left( \frac{f^{(2)}(t)}{2!} \right)^{b_2} \dots \left( \frac{f^{(m)}(t)}{m!} \right)^{b_m},$$
where the sum is over all $m$-tuples of nonnegative integers $(b_1, \ldots, b_m)$ such that 
$b_1 + 2b_2 + \cdots + m b_m = m$ and $k = b_1 + \cdots + b_m.$ 
This formula is extended to the multivariate case in \cite{dibruno} by using multi-index partitions.
Its implementation is quite cumbersome, but recently an optimized algorithm has been 
introduced \cite{dibruno} by using a symbolic method, known in the literature as the classical umbral calculus \cite{SIAM}. 
Its main device is to represent number sequences by suitable symbols via a linear functional, resembling the expectation of random 
variables (r.v.'s). Thanks to this symbolic method, the multivariate Fa\`a di Bruno formula is computed by using 
suitable multivariate polynomials whose indeterminates are replaced by different polynomials. In this paper we characterize a bases for the space of multivariate time-space harmonic polynomials (\ref{spaziopol})
involving L\'evy processes. These polynomials have a simple expression, easily implementable in any 
symbolic software by using the algorithms addressed in \cite{dibruno}.
   
Special families of multivariate polynomials such as the Hermite polynomials, the Bernoulli polynomials and the Euler polynomials are then recovered. The new class of multivariate L\'evy-Sheffer systems is introduced. The remainder of the paper is organized in order
to resume terminology, notations and some basic definitions of the symbolic method. For the symbolic univariate L\'evy process introduced in \cite{TSH}, we add two more examples: the stable and the inverse Gaussian processes. 

Since orthogonal polynomials are currently employed in mathematical finance \cite{schoutens}, an interesting application which 
deserves further deepening studies is the orthogonal property of multivariate time-space harmonic polynomials.
Some preliminarily results are given in \cite{dinardotsh} and \cite{lawi}. These are certainly connected to multivariate Sheffer sequences 
\cite{brown} whose treatment would indeed benefit of an umbral approach. Finally, since the symbolic method has already been 
applied within random matrix theory \cite{DinardoMc} in studying their cumulants, we believe fruitful to employ 
this setting early to matrix-valued polynomial processes. 
%------------------------------------------------------------------------------------
\section{The symbolic method}
%------------------------------------------------------------------------------------

The symbolic method we refer is a syntax consisting of a set $\A = \{\alpha, \beta, \gamma, \ldots\}$ 
of symbols called \emph{umbrae} and a linear functional called \emph{evaluation} $E\,:\, \Real[x][\A]\To \Real[x],$ with $\Real$ the set of real numbers, such that
$E[1] = 1$ and 
$$E[x^n \, \alpha^i \, \beta^j \, \cdots \, \gamma^k ]=x^n \, E[\alpha^i] \, E[\beta^j] \, \cdots \,  E[\gamma^k]
\qquad \hbox{(\emph{uncorrelation property})}$$ 
for all distinct umbrae $\alpha, \beta, \ldots, \gamma \in \A$ and for all nonnegative integers $n, i, j, \ldots k.$  A  unital sequence of real numbers $a_0 = 1, a_1, a_2, \ldots$ is said to be \emph{umbrally represented} by an umbra $\alpha$ if $E[\alpha^k] = a_k, \, \mbox{ for all } k \geq 0.$ By analogy with moments of a r.v., the $k$-th element 
of the sequence $\{a_k\}$ is called the $k$-th moment of the umbra $\alpha.$

Two distinct umbrae $\alpha$ and $\gamma$ are said to be \emph{similar} if and only if they represent the same sequence of moments
$$\alpha \equiv \gamma \,\Leftrightarrow \, E[\alpha^k] = E[\gamma^k], \quad \mbox{ for } k = 0, 1, 2, \ldots.$$

A polynomial $p \in \Real[\A]$ is called an \emph{umbral polynomial}. The \emph{support} of $p$ is the set of all 
occurring umbrae of $\A.$ Two umbral polynomials $p$ and $q$ are said to be \emph{umbrally equivalent} if and only if $E[p] = E[q],$ in symbols $p \simeq q.$ They are uncorrelated when their supports are disjoint. 

The formal power series
\begin{equation}
u + \sum_{k \geq 1} \alpha^k \frac{z^k}{k!} \in \Real[\A][z]
\label{gf1}
\end{equation}
is the \emph{generating function} of the umbra $\alpha$ and it is denoted by $e^{\alpha z}.$ By extending coefficientwise the notion of umbral equivalence, any exponential formal power series
\begin{equation}
f(z) = 1 + \sum_{k \geq 1} a_k \frac{z^k}{k!}
\label{gf2}
\end{equation}
can be umbrally represented by a formal power series (\ref{gf1}). Indeed, if the sequence $\{a_k\}$ is 
umbrally represented by an umbra $\alpha,$ we have $f(z) = E[e^{\alpha z}].$  The formal power series (\ref{gf2}) is denoted by $f(\alpha, z)$
to underline the role played by the moments of the umbra $\alpha.$ Special umbrae are 
\begin{description}
\item[{\it a)}] the \emph{singleton umbra} $\chi$ with generating function $f(\chi, z) = 1 + z;$ 
\item[{\it b)}] the \emph{unity umbra} $u$ with generating function $f(u,z) = e^z;$
\item[{\it c)}] the \emph{augmentation umbra} $\varepsilon$ with generating function $f(\varepsilon,z) = 1;$
\item[{\it d)}] the \emph{Bell umbra} $\beta$ with generating function $f(\beta, z) = \exp\{e^z - 1\}.$ 
\end{description}

An \emph{auxiliary umbra} is a symbol not in the alphabet $\A$ but defined in such a way that it represents special sequences of moments. Then, the alphabet $\A$ is extended by inserting these auxiliary symbols as they were elements 
of $\A.$ For example, by the auxiliary symbol $\alpha \dot{+} \gamma,$ we denote 
the \emph{disjoint sum} of two umbrae, representing the sequence of coefficients of 
$f(\alpha,z) + f(\gamma,z) - 1.$ Therefore we write $f(\alpha \dot{+} \gamma,z) = f(\alpha,z) + f(\gamma,z) - 1$
and deal with $\alpha \dot{+} \gamma$ as it was an element of the alphabet $\A.$ 
One more example, which will be largely employed in the rest of the paper, is
the \emph{dot-product} $n \punt \alpha$ of a nonnegative integer $n$ and an umbra $\alpha.$ Let us  
consider the set $\{\alpha^{\prime}, \alpha^{\prime\prime}, \ldots, \alpha^{\prime\prime\prime}\}$ 
of $n$ distinct umbrae, similar to the umbra $\alpha$ but uncorrelated each other. Define 
the symbol $n \punt \alpha$ as $n \punt \alpha = \alpha^{\prime} + \alpha^{\prime\prime} + \cdots + \alpha^{\prime\prime\prime}.$
Then, we have $f(n \punt \alpha, z) = [f(\alpha, z)]^n.$ The moments of $n \punt \alpha$ are (see \cite{Dinsen} for further details) 
\begin{equation}
E[(n \punt \alpha)^k] = \sum_{i = 1}^k (n)_i B_{k,i}(a_1, a_2, \ldots, a_{k - i + 1}), \qquad \hbox{for all $k \geq 1,$}
\label{dot_mom1}
\end{equation}
where $B_{k,i}(a_1, a_2, \ldots, a_{k - i + 1})$ are the partial exponential Bell polynomials \cite{comtet}, $(n)_i 
= n (n - 1) \cdots (n - i + 1)$ is the lower factorial and $\{a_j\}$ are moments of $\alpha.$ 

Equation (\ref{dot_mom1}) suggests a way to define new auxiliary umbrae depending on a real parameter $t.$ Indeed, 
the $k$-th moment of the dot-product $n \punt \alpha$ is a polynomial, say $q_k(n),$ of degree $k$ in $n.$ 
The integer $n$ can be replaced by any $t \in \Real$ so that
$$
q_k(t) = \sum_{i = 1}^k (t)_i B_{k,i}(a_1, a_2, \ldots, a_{k - i + 1}) \quad \hbox{for all $k \geq 1,$}
$$
still denotes a polynomial of degree $k$ in $t.$ The symbol $t \punt \alpha$ is then introduced as the 
auxiliary umbra such that $E[(t \punt \alpha)^k] = q_k(t),$ for all nonnegative integers $k.$ This 
symbol is called the dot-product of $t$ and $\alpha.$ Its generating function is 
$f(t \punt \alpha, z) = [f(\alpha, z)]^t.$ By using similar arguments, the real parameter $t$ could be replaced by any umbra $\gamma.$ 
The umbra $\gamma \punt \alpha$ representing $\{E[q_k(\gamma)]\}_{k \geq 0}$  
is the dot-product of $\gamma$ and $\alpha$ and $f(\gamma \punt \alpha, z) = f(\gamma, \log[f(\alpha, z)]).$
These replacements are the main device of the symbolic method. For example, we can replace the umbra 
$\gamma$ by the auxiliary umbra $t \punt \beta$ such that $E[(t \punt \beta)_k] = t^k$ for all $k \geq 1,$ see \cite{Bernoulli}. Then we have 
\begin{equation}
q_k(t \punt \beta \punt \alpha) = \sum_{i = 1}^k t^i \, B_{k,i}(a_1, a_2, \ldots, a_{k - i + 1}) \quad \hbox{for all $k \geq 1,$}
\label{dot_mom_umbr1}
\end{equation}
and $(t \punt \beta) \punt \alpha \equiv t \punt (\beta \punt \alpha).$ We omit the parentheses when 
they are not necessary for the computations. The generating function of $t \punt \beta \punt \alpha$ is
\begin{equation}
f(t \punt \beta \punt \alpha,z) = \exp\{t[f(\alpha,z) - 1]\}.
\label{(genfun1)}
\end{equation}
Again, we can replace $t$ in (\ref{dot_mom_umbr1}) with an umbra $\gamma.$ The umbra $\gamma \punt \beta
\punt \alpha$ is the composition umbra of $\gamma$ and $\alpha,$ see \cite{Bernoulli}. 
%-----------------------------------------------------------------------------------------------------------
\subsection{Symbolic L\'evy processes}
%-----------------------------------------------------------------------------------------------------------
Recall that a stochastic process $\{X_t\}_{t \geq 0}$ on $\Real$ is a \emph{L\'evy process} if its increments $X_{t} - X_{t-1}$ are independent and stationary r.v.'s. 
L\'evy processes share the infinite divisibility property \cite{sato} and their symbolic representation generalizes this own property. 
Indeed the following theorem has been proved in \cite{TSH}.  
\begin{thm} \label{T1}
Let $\{X_t\}_{t \geq 0}$ be a L\'evy process with finite moments for all $t$ and let $\alpha$ be an umbra such that $f(\alpha,z) = E[e^{z X_1}].$ Then for any $t \geq 0,$ the moment sequence of the L\'evy process $\{X_t\}_{t \geq 0}$ is umbrally represented by the family of auxiliary umbrae $\{t \punt \alpha\}_{t \geq 0}.$ 
\end{thm}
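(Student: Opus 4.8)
The plan is to show that the moment generating function of $X_t$ coincides, as a formal power series in $z$, with the generating function of the auxiliary umbra $t \punt \alpha$. Recall from the preceding discussion that $f(t \punt \alpha, z) = [f(\alpha, z)]^t$. Writing $\psi_t(z) = E[e^{z X_t}]$, which is a well-defined formal power series $1 + \sum_{k \geq 1} E[X_t^k]\, z^k/k!$ precisely because $X_t$ has finite moments for all $t$, the whole statement reduces by the coefficientwise extension of umbral equivalence to the single identity $\psi_t(z) = [f(\alpha, z)]^t$, that is, to $E[X_t^k] = E[(t \punt \alpha)^k]$ for every $k \geq 0$. So the entire task is to prove the multiplicative relation $\psi_t(z) = [\psi_1(z)]^t$.

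First I would extract a semigroup identity from the defining structure of a L\'evy process. For $s, t \geq 0$ decompose $X_{s+t} = X_s + (X_{s+t} - X_s)$; independence of increments makes the two summands independent, and stationarity of increments makes $X_{s+t} - X_s$ have the same distribution as $X_t$. Hence $\psi_{s+t}(z) = E[e^{z X_s}]\, E[e^{z(X_{s+t}-X_s)}] = \psi_s(z)\, \psi_t(z)$. Together with $\psi_0(z) = E[e^{z X_0}] = 1$ (since $X_0 = 0$, consistent with $0 \punt \alpha \equiv \varepsilon$ and $f(\varepsilon, z) = 1$), this shows that for fixed $z$ the map $t \mapsto \psi_t(z)$ solves the multiplicative Cauchy equation on $[0, \infty)$.

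Next I would solve that functional equation. For integer $t = n$, iterating the multiplicative property gives $\psi_n(z) = [\psi_1(z)]^n$, and for rational $t = p/q$ the relation $\psi_1(z) = [\psi_{1/q}(z)]^q$ yields $\psi_{p/q}(z) = [\psi_1(z)]^{p/q}$. To reach arbitrary real $t \geq 0$ I would invoke the stochastic continuity of the L\'evy process, which renders $t \mapsto \psi_t(z)$ continuous; a continuous multiplicative solution with value $1$ at the origin is forced to be $\psi_t(z) = [\psi_1(z)]^t$. Since $\psi_1(z) = E[e^{z X_1}] = f(\alpha, z)$ by hypothesis, this gives $\psi_t(z) = [f(\alpha, z)]^t = f(t \punt \alpha, z)$, and comparing coefficients delivers $E[X_t^k] = E[(t \punt \alpha)^k]$ for all $k$, which is the claim.

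The only delicate step is the passage from rational to real $t$, where one must be sure that raising the generating function to a real power is a legitimate operation. Here the finite-moment hypothesis is what makes everything clean: working coefficientwise, $f(\alpha, z)$ has constant term $1$, so $\log f(\alpha, z)$ is a formal power series with no constant term and $[f(\alpha, z)]^t = \exp\{t \log f(\alpha, z)\}$ is a bona fide formal power series whose $k$-th coefficient is a polynomial in $t$. This suggests an equivalent, purely umbral route that bypasses continuity altogether: each $E[X_t^k]$ is a polynomial in $t$ (the cumulants of $X_t$ are $t$ times those of $X_1$, and moments are polynomials in cumulants), and so is $q_k(t) = E[(t \punt \alpha)^k]$ by its construction in (\ref{dot_mom_umbr1}); since the two polynomials agree at every nonnegative integer $n$, where $X_n$ is a sum of $n$ independent copies of $X_1$ matching the uncorrelated sum $n \punt \alpha$, they agree identically. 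I expect the functional-equation argument to be the most transparent, with this real-$t$ extension the one point genuinely requiring care.
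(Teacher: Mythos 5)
The paper itself offers no proof of Theorem~\ref{T1}: it is imported verbatim from the reference [TSH] (``the following theorem has been proved in \cite{TSH}''), so there is no internal argument to compare yours against line by line. Judged on its own, your outline is the natural one and almost certainly the intended one: reduce the whole claim to the single formal-power-series identity $E[e^{zX_t}]=[f(\alpha,z)]^t$, extract the multiplicative semigroup law $\psi_{s+t}=\psi_s\psi_t$ from independence and stationarity of increments, settle integer and rational $t$, and extend to real $t$. You correctly flag the last step as the delicate one, but neither of the two ways you propose to close it is sound as written, and that is where the genuine gap sits.

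The continuity route fails because stochastic continuity gives only $X_{t+h}\to X_t$ in probability, which does not imply $E[X_{t+h}^k]\to E[X_t^k]$ without a uniform integrability estimate on $\{X_{t+h}^k\}$ that you neither state nor derive. The polynomial route, as you justify it, is circular: ``the cumulants of $X_t$ are $t$ times those of $X_1$'' is precisely the statement $\log\psi_t=t\log\psi_1$, i.e.\ the theorem in logarithmic form. Both defects are repaired by routing through the characteristic function rather than the moment generating function: $t\mapsto E[e^{iuX_t}]$ is continuous by dominated convergence (the integrand is bounded by $1$), so the Cauchy functional-equation argument legitimately yields $E[e^{iuX_t}]=e^{t\eta(u)}$ with $\eta=\log E[e^{iuX_1}]$ near $u=0$; since all moments are finite, one may differentiate $k$ times at $u=0$, and Fa\`a di Bruno then exhibits $E[X_t^k]$ as a polynomial in $t$ of degree at most $k$. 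That polynomial agrees with $q_k(t)=E[(t\punt\alpha)^k]$ at every nonnegative integer $n$, where $X_n$ is a sum of $n$ i.i.d.\ copies of $X_1$ matching $n\punt\alpha$, hence agrees identically. With that substitution your proof is complete; without it, the passage from rational to real $t$ is unsupported.
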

Theorem \ref{T1} states that if the moments of an umbra $\alpha$ are all finite, the family of auxiliary umbrae $\{t \punt \alpha\}_{t \geq 0}$ is the umbral counterpart of a L\'evy process $\{X_t\}_{t \geq 0}$ such that $E[X_t^k] = E[(t \punt \alpha)^k],$ for all nonnegative integers $k.$ 
In \cite{noncentered} and \cite{TSH}, several examples of symbolic L\'evy processes have been given. Here we add two more
examples.  
%------------------------------------------------------------------------------------
\paragraph{The $m$-stable process.}
%------------------------------------------------------------------------------------
A $m$-stable process is a L\'evy process whose increments are independent, stationary and $m$-stable distributed. 
In particular the $m$-stable process is an example of stochastic process with not convergent moment generating function
in any neighborhood of zero. Nevertheless, since the symbolic method asides from the convergence of 
formal power series (\ref{gf2}), we are able to characterize L\'evy processes which are $m$-stables. 
\begin{defn} \label{def2}
An umbra $\alpha \in \A$ is said to be \emph{stable} with \emph{stability parameter} $m \in [0, 2)$ if there exist $b_n,\, c_n 
\in \Real$ such that $n \punt \alpha \equiv c_n \alpha + b_n \punt u,$ where $u$ is the unity umbra.
\end{defn}

Previous definition parallels the same given in probability theory \cite{levy1, levy2} for stable r.v.'s  
In \ref{def2}, we have $c_n = n ^{1/m}$ by using similar arguments given in \cite{levy2}. From Definition 
\ref{def2}, we have $$b_n \punt u \equiv n \punt \alpha + n ^{1/m} (- 1 \punt \alpha)$$
where $-1 \punt \alpha$ is the inverse of the umbra $\alpha,$ that is the auxiliary umbra such that
$ \alpha + (- 1 \punt \alpha) \equiv \varepsilon.$

\begin{defn} The family of auxiliary umbrae $\{t \punt \alpha\}_{t \geq 0},$ where $\alpha$ is a stable umbra,
is a symbolic stable L\'evy process. 
\end{defn}

%-----------------------------------------------------------------------------------------------------------------------
\paragraph{Inverse Gaussian process.}
%-----------------------------------------------------------------------------------------------------------------------
An inverse Gaussian process $\{X_t^{\scriptscriptstyle{(IG)}}\}_{t \geq 0}$ is a L\'evy process with independent, stationary and 
inverse Gaussian distributed increments \cite{chhikara}, that is $X_t^{\scriptscriptstyle{(IG)}} \sim IG(a,b)$
with $a, b > 0.$ The moment generating function of $X_t^{\scriptscriptstyle{(IG)}}$ is 
\begin{equation}
E\left[e^{z X_t^{\scriptscriptstyle{(IG)}}}\right] = \exp\left\{t \frac{b}{a}\left[ 1 - \left( \sqrt{1 - \frac{2 a^2 z}{b}} \right) 
\right]\right\}.
\label{gf_IG}
\end{equation}

%The name \emph{inverse Gaussian} depends on the fact that, as Tweedie observed in \cite{tweedie}, there exists an inverse relationship 
%between the cumulant generating functions of this distribution of probability and the cumulant generating function of gaussian random 
%variables. More precisely, if $X$ is an inverse gaussian random variable and $Y$ is a Gaussian random variable, we have
%\begin{equation}
%\log(E[e^{-zX}]) = (\log(E[e^{-zY}]))^{\scriptscriptstyle{<-1>}}.
%\label{gfc_prop}
%\end{equation}
%
%We use this property to introduce the \emph{inverse Gaussian umbra}.
To obtain the symbolic expression of an inverse Gaussian process, we need to recall the notion of compositional inverse of an umbra. Indeed, if $\alpha$ is an umbra
with generating function $f(\alpha,z),$ then the compositional inverse of $\alpha$ is the umbra 
$\alpha^{\scriptscriptstyle{<-1>}}$ such that 
$\alpha \punt \beta \punt \alpha^{\scriptscriptstyle{<-1>}} \equiv \alpha^{\scriptscriptstyle{<-1>}} \punt \beta \punt \alpha \equiv \chi.$ 
In particular its generating function is such that $f(\alpha^{\scriptscriptstyle{<-1>}},z) = f^{\scriptscriptstyle{<-1>}}(\alpha,z)$ where
\begin{equation}
f[\alpha, f^{\scriptscriptstyle{<-1>}}(\alpha,z) - 1] =  f^{\scriptscriptstyle{<-1>}}[\alpha,f(\alpha,z) -1] = 1 + z.
\label{(compinv)}
\end{equation}

\begin{defn} \label{defIG}
An umbra $\alpha$ is said to be an \emph{inverse Gaussian umbra} if
$-\alpha \equiv \beta \punt (-b \chi \dot{+} \sqrt{a} \delta)^{\scriptscriptstyle{<-1>}}$
with $\chi$ the singleton umbra and $\delta$ the Gaussian umbra, such that
$f(\delta,z)= 1 + z^2/2.$
\end{defn}

Set $\sqrt{a}=s.$ Definition \ref{defIG} moves from the generating function given in (\ref{gf_IG}). Indeed  we have
\begin{align*}
f(-\alpha, z) = f[\beta \punt (-b \chi \dot{+} s \delta)^{\scriptscriptstyle{<-1>}},z] = \exp\{f[(-b \chi \dot{+} s \delta)^{\scriptscriptstyle{<-1>}},z] - 1\}.
\end{align*}

\noindent
For the sake of simplicity, set $f^{\scriptscriptstyle{<-1>}}(-b \chi \dot{+} s \delta,z) = \bar{f}(z).$ From (\ref{(compinv)}) we have
$f(-b \chi \dot{+} s \delta, \bar{f}(z) - 1) = 1 + z.$ Since  
$f(-b \chi \dot{+} s \delta, z) = (1 - bz) + s^2 z^2 /2$ 
then $s^2 {\bar{f}}^2(z) - 2 (b + s^2) \bar{f}(z) + 2b +  s^2 - 2 z = 0.$
Solving the previous equation with respect to $\bar{f}(z)$ and 
replacing $s^2=a$ we obtain 
\begin{equation} 
\bar{f}(z) = 1 + \frac{b}{a}\left[1 \pm \sqrt{1 +  \frac{2 z a}{b^2}}\right].
\label{sol}
\end{equation}
As $\bar{f}(0)=1,$ see (\ref{gf2}), in (\ref{sol}) we choose the minus sign so that
$$f[\beta \punt (-b \chi \dot{+} s \delta)^{\scriptscriptstyle{<-1>}},z] = \exp\left\{\frac{b}{a}\left[1 - \sqrt{1 +  \frac{2 z a}{b^2}}\right]\right\}.$$
By using (\ref{(genfun1)}), we recover the generating function (\ref{gf_IG}).

\begin{defn}
The family of auxiliary umbrae $\{t \punt \beta \punt (-b \chi \dot{+} \sqrt{a} \delta)^{\scriptscriptstyle{<-1>}}\}_{t \geq 0}$ is the symbolic inverse Gaussian process.  
\end{defn}
%
%----------------------------------------------------------------------------------------------------------------
\section{Multivariate time-space harmonic polynomials}
%----------------------------------------------------------------------------------------------------------------
In the univariate classical umbral calculus, the main device is to replace $a_n$ with $\alpha^n$ via the linear evaluation $E.$ In the same way, the main device of the multivariate symbolic method is to replace sequences like $\{g_{i_1,i_2,...,i_d}\}$ with a product of powers 
$\mu_1^{i_1} \mu_2^{i_2} \ldots \mu_d^{i_d},$ where $\{\mu_1, \mu_2, \ldots, \mu_d\}$ are umbral monomials and $i_1, \ldots, i_d$ 
are nonnegative integers. Note that the supports of the umbral monomials in $\{\mu_1, \mu_2, \ldots, \mu_d\}$ are not necessarily disjoint. A multivariate version of this symbolic method has been given first in \cite{dibruno}. 

A sequence $\{g_{\vbs}\}_{\vbs \in \mathbf{N}_0^d}$ with $g_{\vbs} = g_{v_1, \ldots, v_d}$ and $g_{\bf 0} = 1$ is umbrally represented by the $d$-tuple $\mubs$ if $E[\mubs^{\vbs}] = g_{\vbs},$
for all $\vbs \in \mathbb{N}_0^d,$ with $\mubs^{\vbs} =  \mu_1^{v_1} \mu_2^{v_2} \ldots \mu_d^{v_d}.$ Then $g_{\vbs}$ is called the \emph{multivariate moment} of $\mubs.$ 

Two $d$-tuples $\mubs$ and $\nubs$ of umbral monomials are said to be similar if they represent the same sequence of multivariate moments, 
in symbols $E[\mubs^{\vbs}] = E[\nubs^{\vbs}], \mbox{ for all } \vbs \in \mathbb{N}_0^d.$ Two $d$-tuples $\mubs$ and $\nubs$ of umbral monomials are said to be uncorrelated if  
$E[\mubs^{\vbs_1} \nubs^{\vbs_2}] = E[\mubs^{\vbs_1}] E[\nubs^{\vbs_2}] , \mbox{ for all } \vbs_1, \vbs_2 \in \mathbb{N}_0^d.$

The exponential multivariate formal power series
\begin{equation}
e^{\mubs \zbs^{\trasp}} = \boldsymbol{u} + \sum_{k \geq 1} \sum_{\substack{\vbs \in \mathbb{N}_0^d \\ |\vbs| = k}} 
\mubs^{\vbs} \frac{\zbs^{\vbs}}{\vbs!}
\label{multi_gf}
\end{equation}
is said to be the \emph{generating function} of the $d$-tuple $\mubs.$ Now, assume $\{g_{\vbs}\}_{\vbs \in \mathbb{N}_0^d}$ umbrally 
represented by the $d$-tuple $\mubs.$ If the sequence $\{g_{\vbs}\}_{\vbs \in \mathbb{N}_0^d}$ has exponential multivariate generating 
function
$$ f(\mubs, \zbs) = 1 + \sum_{k \geq 1} \sum_{\substack{\vbs \in \mathbb{N}_0^d \\ |\vbs| = k}} g_{\vbs} \frac{\zbs^{\vbs}}{\vbs!},$$
suitably extending coefficientwise the action of $E$ to the generating function (\ref{multi_gf}), we have
$E[e^{\mubs \zbs^{\trasp}}] = f(\mubs, \zbs).$ Henceforth, when no confusion occurs, we refer to $f(\mubs, \zbs)$ as the generating 
function of the $d$-tuple $\mubs.$

As done in the univariate case, we can introduce the auxiliary umbra $n \punt \mubs.$ To express its moments,
the notion of multi-index partition \cite{dibruno} needs to be recalled. 

\begin{defn} \label{partition_multi}
A partition $\lambs$ of a multi-index $\vbs,$ in symbols $\lambs \vdash \vbs,$ is a 
matrix $\lambs = (\lambda_{ij})$ of nonnegative integers and with no zero columns in lexicographic order
$\prec$ such that $\lambda_{r_1} + \lambda_{r_2} + \cdots + \lambda_{r_k} = v_r$ for $r = 1, 2, \ldots , d.$ 
\end{defn}
The number of columns of $\lambs$ is denoted by $l(\lambs)$. The notation $\lambs
= (\lambs_{1}^{r_1} , \lambs_{2}^{r_2}, \ldots)$ represents the
matrix $\lambs$ with $r_1$ columns equal to $\lambs_{1},$ $r_2$ columns equal to $\lambs_{2}$ and so on, 
where $\lambs_{1} \prec  \lambs_{2} \prec \ldots.$ We set $\mathfrak{m}(\lambs) = (r_1, r_2, \ldots),$ $\mathfrak{m}(\lambs)! = r_1! r_2! \cdots$ and $\lambs! = \lambs_1! \lambs_2! \cdots.$
Then, for a nonnegative integer $n$ we have
\begin{equation}
(n \punt \mubs)^{\vbs} \simeq \sum_{\lambs \vdash \vbs} \frac{\vbs!}{\mathfrak{m}(\lambs) \lambs!} \, (n)_{l(\lambs)} \, 
\mubs_{\lambs},
\label{(auxmult)}
\end{equation}
where $\mubs_{\scriptscriptstyle{\lambs}} = (\mubs_{\scriptscriptstyle{\lambs}}^{\prime\lambs_1})^{\punt r_1} 
(\mubs_{\scriptscriptstyle{\lambs}}^{\prime\prime\lambs_2})^{\punt r_2} \ldots,$ with $\mubs^{\prime}, \mubs^{\prime\prime}, 
\dots$ uncorrelated $d$-tuple similar to $\mubs.$
By replacing $n$ with the real parameter $t,$ we have 
\begin{equation}
(t \punt \mubs)^{\vbs} \simeq \sum_{\lambs \vdash \vbs} \frac{\vbs!}{\mathfrak{m}(\lambs) \lambs!} \, (t)_{l(\lambs)} \, 
\mubs_{\lambs},
\label{(auxmult1)}
\end{equation}
while by replacing $n$ with the auxiliary umbra $t \punt \beta$ we obtain
\begin{equation}
(t \punt \beta \punt \mubs)^{\vbs} \simeq \sum_{\lambs \vdash \vbs} \frac{\vbs!}{\mathfrak{m}(\lambs) \lambs!} 
t^{l(\lambs)} \, \mubs_{\lambs}.
\label{composition_multi}
\end{equation}
The auxiliary umbrae $t \punt \mubs$ and $t \punt \beta \punt \mubs$ are the building blocks in dealing with symbolic L\'evy processes and multivariate time-space harmonic polynomials. For their definition, the conditional evaluation with respect to an umbral $d$-tuple $\mubs$ needs to be introduced. 
\begin{defn}
Assume ${\mathcal{X}} = \{\mu_1, \mu_2, \ldots, \mu_d\}.$ The linear operator 
$$E(\;\cdot \; \vline \,\, \mubs): \, \Real[x_1, \ldots, x_d][\A] \; \longrightarrow \; \Real[\mathcal{X}]$$ 
such that $E(1 \,\, \vline \,\, \mubs) = 1$ and 
$$E(x_1^{l_1} \, x_2^{l_2} ,\ \cdots \, x_d^{l_d} \, \mubs^{\ibs} \, \nubs^{\jbs} \, \gabs^{\kbs} \cdots \, \,  \vline \,\, \mubs) 
= x_1^{l_1} \, x_2^{l_2} \, \cdots \, x_d^{l_d} \, \mubs^{\ibs} \, E[\nubs^{\jbs}] \, E[\gabs^{\kbs}] \cdots$$ 
for uncorrelated $d$-tuples $\mubs, \nubs, \gabs \ldots,$ for $\mbs, \ibs, \jbs,  \ldots \in \mathbb{N}_0^d$ and $\{l_i\}_{i=1}^d$ nonnegative integers, is called \emph{conditional evaluation} with respect to the umbral $d$-tuple $\mubs.$
\end{defn}

\begin{defn}
Let $\{P(\xbs,t)\} \in \Real[x_1, \ldots, x_d]$ be a family of polynomials indexed by $t \geq 0.$ The polynomial $P(\xbs,t)$ is said to be a 
\emph{multivariate time-space harmonic polynomial} with respect to the family of auxiliary umbrae $\{t \punt \mubs\}_{t \geq 0}$ 
if and only if
\begin{equation}
E \left( P(t \punt \mubs, t) \, \, \vline \, \, s \punt \mubs \right) = P(s \punt \mubs,s), \;\; \mbox{ for all } s \leq t.
\label{def_tsh_multi}
\end{equation}
\end{defn}
Since $f[(n + m) \punt \mubs, \zbs] = f(\mubs, \zbs)^{n + m} = f(n \punt \mubs, \zbs) \, f(m \punt \mubs, \zbs),$ then 
\begin{equation}
(n + m) \punt \mubs \equiv n \punt \mubs + m \punt \mubs^{\prime},
\label{(aaa1)}
\end{equation}
with $\mubs$ and $\mubs^{\prime}$ uncorrelated $d$-tuples of umbral monomials. Then, for 
$E(\;\cdot \; \vline \,\, \mubs)$ is reasonable to assume  
\begin{equation}
E[\{(n + m) \punt \mubs\}^{\vbs} \,\, \vline \,\, n \punt \mubs] = E[\{n \punt \mubs + m \punt \mubs^{\prime}\}^{\vbs} \,\, 
\vline \,\, n \punt \mubs],
\label{(aaa)}
\end{equation}
for all nonnegative integers $n, m$ and for all $\vbs \in \mathbb{N}_0^d.$ If $n \ne m,$ then 
equation (\ref{(aaa)}) gives
\begin{equation}
E[\{(n + m) \punt \mubs\}^{\vbs} \,\, \vline \,\, n \punt \mubs] = E[\{n \punt \mubs + m \punt \mubs\}^{\vbs} \,\, 
\vline \,\, n \punt \mubs],
\label{(aaa1)}
\end{equation}
since $n \punt \mubs$ and $m \punt \mubs$ are uncorrelated auxiliary umbrae. We will use (\ref{(aaa1)})
when no misunderstanding occurs.  Thanks to equation (\ref{(aaa)})
we have
\begin{align}
\nonumber
E\left[\{(n + m) \punt \mubs\}^{\vbs} \,\, \vline \,\, n \punt \mubs\right] & = E\left[\sum_{\kbs \leq \vbs} \binom{\vbs}{\kbs} 
(n \punt \mubs)^{\kbs} (m \punt \mubs)^{\vbs - \kbs} \,\, \vline \,\, n \punt \mubs \right] \\
& = \sum_{\kbs \leq \vbs} \binom{\vbs}{\kbs} (n \punt \mubs)^{\kbs} E[(m \punt \mubs)^{\vbs - \kbs}],
\label{ce_dot_multi}
\end{align}
where $\kbs \leq \vbs \; \Leftrightarrow \; k_i \leq v_i, \mbox{ for all } i = 1, \ldots, d$ and 
$\binom{\kbs}{\vbs} = \binom{k_1}{v_1} \cdots \binom{k_d}{v_d}.$
By analogy with (\ref{(aaa1)}) and (\ref{ce_dot_multi}), we have $(t + s) \punt \mubs \equiv t 
\punt \mubs + s \punt \mubs$ and for $t \geq 0$
\begin{equation}
E\left[(t \punt \mubs)^{\vbs} \,\, \vline \,\, s \punt \mubs \right] = \sum_{\kbs \leq \vbs} \binom{\vbs}{\kbs} (s \punt \mubs)^{\kbs} 
E[\{(t - s) \punt \mubs\}^{\vbs - \kbs}].
\label{condeval_dot_multi}
\end{equation}
As $s,t \in {\mathbb R},$ recall that the auxiliary umbra $-t \punt \mubs$ denotes the inverse of $t \punt \mubs$ that is $- t \punt \mubs + t \punt \mubs \equiv \varbs$ where $\varbs$ is the $d$-tuple such that $\varbs=(\varepsilon_1, \varepsilon_2, \ldots, \varepsilon_d),$ with $\{\varepsilon_i\}$ uncorrelated augmentation umbrae.

Theorem 3.6 allows us to introduce the class of multivariate time-space harmonic polynomials with respect to a symbolic $d$-dimensional L\'evy process. Symbolic $d$-dimensional L\'evy processes have been introduced in \cite{multilevy}. Here we recall
the main results.

\begin{defn} \label{def_levy_multi}
A stochastic process $\{\X_t\}_{t \geq 0}$ on $\Real^d$ is a \emph{multidimensional L\'evy process} if
\begin{description}
	\item[{\rm (i)}] $\X_0 = \boldsymbol{0}$ a.s.
	
	\item[{\rm (ii)}] For all $n \geq 1$ and for all $0 \leq t_1 \leq t_2 \leq \ldots \leq t_n < \infty,$ the r.v.'s $\X_{t_2} - \X_{t_1}, \X_{t_3} - \X_{t_2}, \ldots$ are independent.
	
	\item[{\rm (iii)}] For all $s \leq t,$ $\X_{t + s} - \X_s \stackrel{d}{=} \X_t.$
	
	\item[{\rm (iv)}] For all $\varepsilon > 0,$ $\lim_{h \rightarrow 0} P(|\X_{t + h} - \X_t| > \varepsilon) = 0.$
	
	\item[{\rm (v)}] $t \mapsto \X_t (\omega)$ are c\'adl\'ag, for all $\omega \in \varOmega,$ with $\varOmega$ the underlying sample space.   
\end{description}
\end{defn}

The moment generating function of a \emph{multidimensional L\'evy process}
is $\varphi_{\scriptscriptstyle{t}}(\zbs) = [\varphi_{\scriptscriptstyle{\X_1}}(\zbs)]^t$ with  
$\varphi_{\scriptscriptstyle{\X_1}}(\zbs) = E\left[e^{\zbs \X_1^{\trasp}}\right]$ and
$\zbs \in \Real^d.$ If we denote by $\mubs$ the $d$-tuple such that $f(\mubs,\zbs)=\varphi_{\scriptscriptstyle{\X_1}}(\zbs),$ then symbolic multidimensional L\'evy processes can be constructed as done in the previous section. 

\begin{thm} \label{primo}
A L\'evy process $\{\X_t\}_{t \geq 0}$ in $\Real^d$ is umbrally represented by $\{t \punt \mubs\}_{t \geq 0},$ where 
$\mubs$ is such that $g_{\vbs} = E[\mubs^{\vbs}] = E\left[\X_1^{\vbs} \right],$ for all $\vbs \in \mathbb{N}_0^d.$
\end{thm}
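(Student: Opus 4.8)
The plan is to prove the statement exactly as its univariate counterpart (Theorem \ref{T1}) was handled in \cite{TSH}: I would show that the family $\{t \punt \mubs\}_{t \geq 0}$ and the process $\{\X_t\}_{t \geq 0}$ share the same exponential multivariate generating function, and then recover the equality of all multivariate moments by extracting coefficients. The identity driving the argument is the multivariate dot-product relation $f(t \punt \mubs, \zbs) = [f(\mubs, \zbs)]^t,$ the $d$-dimensional analogue of $f(t \punt \alpha, z) = [f(\alpha, z)]^t.$

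First I would establish this generating-function identity. For a nonnegative integer $n,$ the symbol $n \punt \mubs$ is by construction the sum of $n$ uncorrelated $d$-tuples each similar to $\mubs,$ so multiplicativity of $E$ over disjoint supports gives $f(n \punt \mubs, \zbs) = [f(\mubs, \zbs)]^n.$ Expanding $(n \punt \mubs)^{\vbs}$ through the multi-index partition formula \eqref{(auxmult)} shows that each moment $E[(n \punt \mubs)^{\vbs}]$ is a polynomial in $n;$ replacing $n$ by the real parameter $t$ as in \eqref{(auxmult1)} is precisely the definition of $t \punt \mubs.$ Since the coefficient of $\zbs^{\vbs}/\vbs!$ in $[f(\mubs, \zbs)]^t$ is itself a polynomial in $t$ agreeing with $E[(n \punt \mubs)^{\vbs}]$ at every integer $n,$ the two polynomials coincide, and therefore $f(t \punt \mubs, \zbs) = [f(\mubs, \zbs)]^t$ for every $t \geq 0.$

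Next I would combine this with the hypotheses. By assumption $\mubs$ is chosen so that $f(\mubs, \zbs) = \varphi_{\X_1}(\zbs),$ that is $E[\mubs^{\vbs}] = E[\X_1^{\vbs}]$ for all $\vbs.$ Using the functional equation $\varphi_t(\zbs) = [\varphi_{\X_1}(\zbs)]^t$ recorded above for the moment generating function of a multidimensional L\'evy process, I obtain
\[
f(t \punt \mubs, \zbs) = [f(\mubs, \zbs)]^t = [\varphi_{\X_1}(\zbs)]^t = \varphi_t(\zbs) = E\left[e^{\zbs \X_t^{\trasp}}\right].
\]
Because the multivariate evaluation acts coefficientwise on the exponential series \eqref{multi_gf}, equating the coefficients of $\zbs^{\vbs}/\vbs!$ at the two ends yields $E[(t \punt \mubs)^{\vbs}] = E[\X_t^{\vbs}]$ for every $\vbs \in \mathbb{N}_0^d,$ which is precisely the assertion that $\{t \punt \mubs\}_{t \geq 0}$ umbrally represents $\{\X_t\}_{t \geq 0}.$

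I expect the only delicate point to be the passage from integer to arbitrary real $t.$ On the umbral side this is painless, since $t \punt \mubs$ is defined by polynomial interpolation of \eqref{(auxmult1)}; the genuine content lies in justifying $\varphi_t(\zbs) = [\varphi_{\X_1}(\zbs)]^t$ for all real $t \geq 0.$ This rests on the independent stationary increments of Definition \ref{def_levy_multi}(ii)--(iii), which give the Cauchy-type relation $\varphi_{t+s}(\zbs) = \varphi_t(\zbs) \, \varphi_s(\zbs),$ together with the stochastic continuity (iv), which forces its measurable solution to be $\varphi_t(\zbs) = [\varphi_{\X_1}(\zbs)]^t.$ As this identity is already stated in the text preceding the theorem, the proof reduces to the coefficient-matching computation above.
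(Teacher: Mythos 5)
Your proof is correct and follows essentially the same route the paper intends: the paper states this theorem as a recalled result from \cite{multilevy}, and the argument implicit in the text immediately preceding it is exactly your generating-function identification $f(t \punt \mubs, \zbs) = [f(\mubs,\zbs)]^t = [\varphi_{\scriptscriptstyle{\X_1}}(\zbs)]^t = \varphi_{\scriptscriptstyle{t}}(\zbs)$ followed by coefficientwise comparison, mirroring the univariate Theorem \ref{T1}. Your added care about the polynomial interpolation in $t$ and the need for finite moments and stochastic continuity to justify $\varphi_{\scriptscriptstyle{t}}(\zbs) = [\varphi_{\scriptscriptstyle{\X_1}}(\zbs)]^t$ is consistent with the paper's setup and introduces no divergence.
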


\begin{thm}\label{UTSH2_multi}
For all $\vbs \in \mathbb{N}_0^d,$ the family of polynomials
\begin{equation}
Q_{\vbs}(\xbs,t) = E[(\xbs - t \punt \mubs)^{\vbs}] \in \Real[x_1, \ldots, x_d]
\label{(tshumbral_multi)}
\end{equation}
is time-space harmonic with respect to $\{t \punt \mubs\}_{t \geq 0}.$
\end{thm}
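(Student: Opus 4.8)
The plan is to verify the defining identity (\ref{def_tsh_multi}) directly, that is, to show $E\left(Q_{\vbs}(t \punt \mubs, t) \,\, \vline \,\, s \punt \mubs\right) = Q_{\vbs}(s \punt \mubs, s)$ for every $\vbs \in \mathbb{N}_0^d$ and all $s \leq t$, and the cleanest route is to process the whole family at once through its generating function rather than to juggle moments term by term. First I would assemble $\{Q_{\vbs}(\xbs,t)\}$ into a single exponential series. Since $Q_{\vbs}(\xbs,t) = E[(\xbs - t \punt \mubs)^{\vbs}]$ and the evaluation extends coefficientwise to (\ref{multi_gf}), one gets
\[ \sum_{\vbs} Q_{\vbs}(\xbs, t)\frac{\zbs^{\vbs}}{\vbs!} = E\big[e^{(\xbs - t \punt \mubs)\zbs^{\trasp}}\big] = e^{\xbs \zbs^{\trasp}}\, f(-t \punt \mubs, \zbs). \]
Because $-t \punt \mubs$ is the inverse of $t \punt \mubs$, so that $-t \punt \mubs + t \punt \mubs \equiv \varbs$ with $f(\varbs,\zbs)=1$, multiplicativity of generating functions under uncorrelated sum gives $f(-t \punt \mubs, \zbs) = f(t \punt \mubs, \zbs)^{-1} = f(\mubs,\zbs)^{-t}$, the last equality being the multivariate analogue of $f(t \punt \alpha, z) = f(\alpha,z)^t$ recalled in Section 2. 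Hence the generating function of the family is simply $e^{\xbs \zbs^{\trasp}}/f(\mubs, \zbs)^t$, which in particular confirms that each $Q_{\vbs}(\xbs,t)$ is a genuine polynomial in $\xbs$ with leading term $\xbs^{\vbs}$.

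Next I would substitute $\xbs \mapsto t \punt \mubs$, taking (as the symbolic calculus demands) the substituted $d$-tuple uncorrelated from the copy integrated out in the definition of $Q_{\vbs}$; the denominator is then an ordinary scalar series and only the numerator still carries umbrae, so that
\[ \sum_{\vbs} Q_{\vbs}(t \punt \mubs, t)\frac{\zbs^{\vbs}}{\vbs!} = \frac{e^{(t \punt \mubs)\zbs^{\trasp}}}{f(\mubs, \zbs)^t}. \]
The decisive step is the increment decomposition $t \punt \mubs \equiv s \punt \mubs + (t-s) \punt \mubs^{\prime}$ for $s \leq t$, with the two summands uncorrelated, which underlies (\ref{condeval_dot_multi}). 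Summing (\ref{condeval_dot_multi}) against $\zbs^{\vbs}/\vbs!$ is precisely the statement that $E\big(e^{(t \punt \mubs)\zbs^{\trasp}} \,\, \vline \,\, s \punt \mubs\big) = e^{(s \punt \mubs)\zbs^{\trasp}}\, f(\mubs,\zbs)^{t-s}$: the conditioning umbra $s \punt \mubs$ passes through untouched while the uncorrelated increment evaluates to $f((t-s) \punt \mubs, \zbs) = f(\mubs,\zbs)^{t-s}$. Applying $E(\,\cdot\,\vline\, s \punt \mubs)$ coefficientwise therefore yields
\[ \sum_{\vbs} E\big(Q_{\vbs}(t \punt \mubs, t) \,\, \vline \,\, s \punt \mubs\big)\frac{\zbs^{\vbs}}{\vbs!} = \frac{e^{(s \punt \mubs)\zbs^{\trasp}}\, f(\mubs,\zbs)^{t-s}}{f(\mubs, \zbs)^t} = \frac{e^{(s \punt \mubs)\zbs^{\trasp}}}{f(\mubs, \zbs)^s}. \]
The right-hand side is exactly the generating function of the first display with $t$ and $\xbs$ replaced by $s$ and $s \punt \mubs$, i.e. $\sum_{\vbs} Q_{\vbs}(s \punt \mubs, s)\zbs^{\vbs}/\vbs!$, so equating coefficients of $\zbs^{\vbs}/\vbs!$ gives (\ref{def_tsh_multi}) for every $\vbs$.

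I expect the main obstacle to lie not in any computation but in the bookkeeping of uncorrelated copies: one must keep the copy of $\mubs$ substituted for $\xbs$ disjoint from the copy defining $Q_{\vbs}$, and then justify that the splitting $t \punt \mubs \equiv s \punt \mubs + (t-s) \punt \mubs^{\prime}$ matches the conditioning in (\ref{condeval_dot_multi}), so that $s \punt \mubs$ is genuinely the umbra conditioned upon and the increment is genuinely uncorrelated with it. A purely combinatorial alternative, which I would keep in reserve to cross-check the coefficientwise interchange of $E(\,\cdot\,\vline\, s \punt \mubs)$ with the formal sum, is to expand $Q_{\vbs}(t \punt \mubs, t)$ by the multivariate binomial theorem, apply (\ref{condeval_dot_multi}) to each factor $(t \punt \mubs)^{\kbs}$, and reorganize the resulting multiple sum using the moment identities for $-t \punt \mubs$ and $(t-s) \punt \mubs$; this reproduces $Q_{\vbs}(s \punt \mubs, s)$ but is considerably heavier than the generating-function argument.
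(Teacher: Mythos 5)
Your argument is correct, and it reaches the paper's conclusion by a recognizably different packaging of the same ingredients. The paper's own proof is precisely the ``combinatorial alternative'' you hold in reserve: it expands $Q_{\vbs}(t \punt \mubs,t) = \sum_{\kbs \leq \vbs} \binom{\vbs}{\kbs} (t \punt \mubs)^{\vbs - \kbs} E[(-t \punt \mubs)^{\kbs}]$, applies (\ref{condeval_dot_multi}) to each monomial $(t \punt \mubs)^{\vbs - \kbs}$, reorganizes the resulting double sum, and finishes with the umbral equivalence $(t-s) \punt \mubs^{\prime} + (-t \punt \mubs) \equiv -s \punt \mubs$, which is exactly the identity $f(\mubs,\zbs)^{t-s} f(\mubs,\zbs)^{-t} = f(\mubs,\zbs)^{-s}$ read off coefficientwise. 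You perform the same three moves --- the increment decomposition $t \punt \mubs \equiv s \punt \mubs + (t-s) \punt \mubs^{\prime}$, the conditional evaluation letting $s \punt \mubs$ pass through while the uncorrelated increment evaluates, and the cancellation of powers of $f(\mubs,\zbs)$ --- but only after summing everything against $\zbs^{\vbs}/\vbs!$, so the Vandermonde-type rearrangement of the double sum collapses into a product of formal power series. What your version buys is transparency: the cancellation $f^{t-s}/f^{t} = f^{-s}$ is visible at a glance. What it costs is the extra (mild, but genuine) obligation to justify that the conditional evaluation with respect to $s \punt \mubs$ commutes with the coefficientwise formation of $e^{(t \punt \mubs)\zbs^{\trasp}}$, and to keep the copy of $\mubs$ substituted for $\xbs$ uncorrelated from the one defining $Q_{\vbs}$ --- both points you correctly flag, and both are legitimate here because each coefficient of $\zbs^{\vbs}$ is a finite sum in the formal-series setting. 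The proof stands as written.
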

\begin{proof}
We need to prove that the family of polynomials $Q_{\vbs}(\xbs,t)$ satisfies equality (\ref{def_tsh_multi}). Observe that
$$Q_{\vbs}(\xbs,t) = E\left[ \sum_{\kbs \leq \vbs} \binom{\vbs}{\kbs} \xbs^{\vbs - \kbs} (-t \punt \mubs)^{\kbs} \right] 
= \sum_{\kbs \leq \vbs} \binom{\vbs}{\kbs} \xbs^{\vbs - \kbs} E[(-t \punt \mubs)^{\kbs}],$$
where $\xbs^{\lbs} = x_1^{l_1} x_2^{l_2} \cdots x_d^{l_d}$ for $\lbs = (l_1, l_2, \ldots, l_d) \in \mathbb{N}_0^d.$
Thus,
\allowdisplaybreaks
\begin{equation}
Q_{\vbs}(t \punt \mubs,t) = \sum_{\kbs \leq \vbs} \binom{\vbs}{\kbs} (t \punt \mubs)^{\vbs - \kbs} E[(-t \punt \mubs)^{\kbs}].
\label{tsh_mul}
\end{equation}

By applying (\ref{tsh_mul}), we have
\begin{align*}
 E\left[Q_{\vbs}(t \punt \mubs,t) \; \vline \,\, s \punt \mubs\right] & =  \sum_{\kbs \leq \vbs} \binom{\vbs}{\kbs} E\left[(t 
\punt \mubs)^{\vbs - \kbs} E[(-t \punt \mubs)^{\kbs}] \; \vline \,\, s \punt \mubs\right] \\
& = \sum_{\kbs \leq \vbs} \binom{\vbs}{\kbs} E\left[(t \punt \mubs)^{\vbs - \kbs} \; \vline \,\, s \punt \mubs\right] E[(-t \punt 
\mubs)^{\kbs}]. 
\end{align*}
The result follows by suitably rearranging the terms and using (\ref{condeval_dot_multi}) 
\begin{align*}
& E\left[Q_{\vbs}(t \punt \mubs,t) \; \vline \,\, s \punt \mubs\right]  = \\
& = \sum_{\kbs \leq \vbs} \binom{\vbs}{\kbs} \left\{ \sum_{\jbs \leq \vbs - \kbs} \binom{\vbs - \kbs}{\jbs} (s \punt \mubs)^{\jbs} 
E[\{(t - s) \punt \mubs^{\prime}\}^{\vbs - \kbs - \jbs}] \right\} E[(-t \punt \mubs)^{\kbs}] \\
& = \sum_{\kbs \leq \vbs} \binom{\vbs}{\kbs} (s \punt \mubs)^{\kbs} E[\{(t - s) \punt \mubs^{\prime} + (-t \punt \mubs)\}^{\vbs - \kbs}] \\
& = \sum_{\kbs \leq \vbs} \binom{\vbs}{\kbs} (s \punt \mubs)^{\kbs} E[(-s \punt \mubs)^{\vbs - \kbs}] = Q_{\vbs}(s \punt \mubs, s).
\end{align*}
\end{proof}
A feature of a multivariate time-space harmonic polynomial is that when $\xbs$ is replaced by $t \punt \mubs$ its
overall evaluation is zero.  
\begin{cor} \label{zero} $E[Q_{\vbs}(t \punt \mubs,t)] = 0.$
\end{cor}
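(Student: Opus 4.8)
The plan is to obtain the identity by applying the evaluation $E$ directly to the closed form (\ref{tsh_mul}) for $Q_{\vbs}(t \punt \mubs, t)$ and then reassembling the resulting scalar sum into a single umbral binomial power, which collapses through the inverse-umbra relation $t \punt \mubs + (-t \punt \mubs) \equiv \varbs$.

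First I would take $E$ of both sides of (\ref{tsh_mul}). Since each factor $E[(-t \punt \mubs)^{\kbs}]$ is already a real number, it passes outside the outer evaluation, giving
\[
E[Q_{\vbs}(t \punt \mubs, t)] = \sum_{\kbs \leq \vbs} \binom{\vbs}{\kbs} E[(t \punt \mubs)^{\vbs - \kbs}] \, E[(-t \punt \mubs)^{\kbs}].
\]
Here the crucial bookkeeping point is that the $t \punt \mubs$ substituted for $\xbs$ and the $-t \punt \mubs$ coming from the definition (\ref{(tshumbral_multi)}) are uncorrelated copies: the inner one was integrated out by the inner $E$ in the definition of $Q_{\vbs}$, exactly as in the proof of Theorem \ref{UTSH2_multi} where the two copies are kept distinct. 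Writing $\eta = t \punt \mubs$ and $\eta' = -t \punt \mubs$ for these uncorrelated $d$-tuples, uncorrelation turns each summand into $E[\eta^{\vbs - \kbs} \eta'^{\kbs}]$, so the whole sum equals the evaluation of the binomial expansion of $(\eta + \eta')^{\vbs}$.

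Next I would invoke the inverse-umbra relation: by definition $-t \punt \mubs$ is the inverse of $t \punt \mubs$, so $\eta + \eta' = t \punt \mubs + (-t \punt \mubs) \equiv \varbs$, the augmentation $d$-tuple. Hence $E[Q_{\vbs}(t \punt \mubs, t)] = E[\varbs^{\vbs}] = \prod_{i=1}^{d} E[\varepsilon_i^{v_i}]$. Because each $\varepsilon_i$ has generating function $f(\varepsilon_i, z) = 1$, all its moments of positive order vanish, so the product is $0$ as soon as $\vbs \neq \boldsymbol{0}$, which is the content of the statement.

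The computation is short; the only real obstacle is the umbral bookkeeping in the first step, namely justifying that the copy of $t \punt \mubs$ replacing $\xbs$ is uncorrelated with the copy appearing inside $Q_{\vbs}$. This is what legitimizes collapsing the product of two separate evaluations into the single evaluation $E[(\eta + \eta')^{\vbs}]$, and hence applying $\eta + \eta' \equiv \varbs$; without it the inverse relation could not be used. An alternative, more conceptual route would be to apply $E$ to the time-space harmonic identity of Theorem \ref{UTSH2_multi} at $s = 0$, using $0 \punt \mubs \equiv \varbs$ together with a tower property $E[E(\,\cdot\,\vline\,s \punt \mubs)] = E[\,\cdot\,]$; this again reduces to $E[\varbs^{\vbs}] = 0$, but it requires first establishing that tower property, so I would prefer the direct expansion above.
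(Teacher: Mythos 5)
Your proof is correct and follows exactly the route the paper leaves implicit: apply $E$ to the expansion (\ref{tsh_mul}), recombine the binomial sum into $E[(t \punt \mubs + (-t \punt \mubs'))^{\vbs}]$ with uncorrelated copies, and invoke the inverse relation $t \punt \mubs + (-t \punt \mubs) \equiv \varbs$ so that all moments of positive order vanish. The paper states the corollary without proof, so there is nothing further to compare; your attention to the uncorrelation bookkeeping and to the trivial case $\vbs = \boldsymbol{0}$ is exactly what the omitted argument requires.
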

Assume 
\begin{equation}
Q_{\vbs}(\xbs,t) = \sum_{\kbs \leq \vbs} q_{\scriptscriptstyle{\kbs}}(t) \, \xbs^{\kbs}.
\label{coeff1_multi}
\end{equation}
Some properties of the coefficients $q_{\scriptscriptstyle{\kbs}}(t)$ are stated in the following propositions.
\begin{cor}\label{cor1_multi}
$q_{\scriptscriptstyle{\kbs}}(t) = \binom{\vbs}{\kbs}E[(-t \punt \mubs)^{\vbs - \kbs}]$ for all $\kbs \leq \vbs$ and 
$q_{\scriptscriptstyle{\kbs}}(0) = 0$ for all $\kbs < \vbs.$
\end{cor}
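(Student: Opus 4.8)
The plan is to read the coefficients $q_{\scriptscriptstyle{\kbs}}(t)$ straight off the expanded form of $Q_{\vbs}(\xbs,t)$ that was already produced inside the proof of Theorem \ref{UTSH2_multi}, and then to specialize to $t=0$. First I would recall from that proof the identity
$$Q_{\vbs}(\xbs,t) = \sum_{\kbs \leq \vbs} \binom{\vbs}{\kbs} \xbs^{\vbs - \kbs} E[(-t \punt \mubs)^{\kbs}],$$
obtained by binomially expanding $(\xbs - t \punt \mubs)^{\vbs}$ and using the linearity of $E.$ The only manipulation needed is a change of summation index: setting $\jbs = \vbs - \kbs,$ the constraint $\kbs \leq \vbs$ becomes $\jbs \leq \vbs,$ while the symmetry $\binom{\vbs}{\vbs - \jbs} = \binom{\vbs}{\jbs}$ of the multi-index binomial coefficient rewrites the sum as $\sum_{\jbs \leq \vbs} \binom{\vbs}{\jbs} \xbs^{\jbs} E[(-t \punt \mubs)^{\vbs - \jbs}].$ Comparing this monomial by monomial with the representation (\ref{coeff1_multi}), and using that the monomials $\{\xbs^{\jbs}\}$ are linearly independent in $\Real[x_1, \ldots, x_d],$ I would identify $q_{\scriptscriptstyle{\kbs}}(t) = \binom{\vbs}{\kbs} E[(-t \punt \mubs)^{\vbs - \kbs}],$ which is the first assertion.

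For the second assertion I would set $t = 0.$ The key observation is that $-0 \punt \mubs \equiv \varbs,$ the $d$-tuple of uncorrelated augmentation umbrae: indeed $0 \punt \mubs$ has generating function $f(\mubs,\zbs)^0 = 1,$ hence $0 \punt \mubs \equiv \varbs,$ and the inverse relation $-0 \punt \mubs + 0 \punt \mubs \equiv \varbs$ then forces $-0 \punt \mubs \equiv \varbs$ as well. Since each augmentation umbra $\varepsilon_i$ satisfies $E[\varepsilon_i^{k}] = 0$ for every $k \geq 1,$ the multivariate moment $E[\varbs^{\vbs - \kbs}]$ vanishes whenever $\vbs - \kbs \neq \boldsymbol{0}.$ For $\kbs < \vbs$ — meaning $\kbs \leq \vbs$ with $\kbs \neq \vbs,$ so that $\vbs - \kbs$ is a nonzero multi-index — this yields $q_{\scriptscriptstyle{\kbs}}(0) = \binom{\vbs}{\kbs} E[\varbs^{\vbs - \kbs}] = 0,$ as claimed. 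Equivalently one may note directly that $Q_{\vbs}(\xbs,0) = E[(\xbs - 0 \punt \mubs)^{\vbs}] = \xbs^{\vbs},$ so the only coefficient surviving at $t=0$ is $q_{\scriptscriptstyle{\vbs}}(0) = 1.$

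I do not expect a genuine obstacle: the argument is a reindexing followed by coefficient comparison, with the $t=0$ case reduced to a single moment evaluation. The one point demanding care is the identification of $-0 \punt \mubs$ with the augmentation $d$-tuple $\varbs,$ together with the precise reading of the strict order $\kbs < \vbs$ as forcing $\vbs - \kbs$ to be a nonzero multi-index; these two facts are exactly what make the relevant augmentation moments vanish.
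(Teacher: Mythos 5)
Your proposal is correct and follows essentially the same route as the paper: the first identity is obtained by comparing the expansion $Q_{\vbs}(\xbs,t) = \sum_{\kbs \leq \vbs} \binom{\vbs}{\kbs} \xbs^{\vbs - \kbs} E[(-t \punt \mubs)^{\kbs}]$ from the proof of Theorem \ref{UTSH2_multi} with (\ref{coeff1_multi}), and the second by evaluating at $t=0$ via $0 \punt \mubs \equiv \boldsymbol{\varepsilon}$ and the vanishing of the augmentation moments. Your extra care in checking that $-0 \punt \mubs \equiv \boldsymbol{\varepsilon}$ (not just $0 \punt \mubs$) and in making the reindexing explicit only tightens the paper's terser argument.
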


\begin{proof} The first equality follows by comparing (\ref{coeff1_multi}) with (\ref{tsh_mul}). For the second one,
if $t = 0$ then $ 0 \punt \mubs \equiv \boldsymbol{\varepsilon},$ and in particular
$E[(0 \punt \mubs)^{\vbs - \kbs}] = E[\boldsymbol{\varepsilon}^{\vbs-\kbs}] = 1$ if $\kbs = \vbs$ and $0$ otherwise.
\end{proof}
\begin{prop} \label{prop1_multi}
$q_{\scriptscriptstyle\kbs}(t - 1) = \sum_{\kbs \leq \jbs \leq \vbs} \binom{\jbs}{\kbs} \, g_{\jbs} \, q_{\scriptscriptstyle\jbs}(t),$
for all $\, \kbs < \vbs.$
\end{prop}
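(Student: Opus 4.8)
The plan is to work directly from the closed form of the coefficients given by Corollary~\ref{cor1_multi}, namely $q_{\scriptscriptstyle\kbs}(t) = \binom{\vbs}{\kbs}\,E[(-t \punt \mubs)^{\vbs - \kbs}]$, and to reduce the recurrence to a single umbral decomposition of the inverse dot-product $-(t-1) \punt \mubs$ followed by purely combinatorial bookkeeping. First I would specialize Corollary~\ref{cor1_multi} at time $t-1$,
$$q_{\scriptscriptstyle\kbs}(t-1) = \binom{\vbs}{\kbs}\,E[(-(t-1) \punt \mubs)^{\vbs - \kbs}],$$
and then split off one copy of $\mubs$. Since the dot-product is additive in its argument (the relation $(a+b)\punt \mubs \equiv a \punt \mubs + b \punt \mubs^{\prime}$ used to derive (\ref{condeval_dot_multi})) and $1 \punt \mubs \equiv \mubs$, we get $-(t-1)\punt \mubs \equiv -t \punt \mubs + \mubs^{\prime}$, with $\mubs^{\prime}$ a $d$-tuple similar to $\mubs$ but uncorrelated with $-t \punt \mubs$.

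Next I would expand the $(\vbs-\kbs)$-th power by the multi-index binomial theorem and use the uncorrelation of $\mubs^{\prime}$ and $-t \punt \mubs$ to factor the evaluation, recognizing $E[\mubs^{\prime\,\ibs}] = g_{\ibs}$:
$$E[(-(t-1) \punt \mubs)^{\vbs - \kbs}] = \sum_{\ibs \leq \vbs - \kbs} \binom{\vbs - \kbs}{\ibs} \, g_{\ibs}\, E[(-t \punt \mubs)^{\vbs - \kbs - \ibs}].$$
Then I reindex by $\jbs = \kbs + \ibs$, so that $\ibs = \jbs - \kbs$ ranges over $\kbs \leq \jbs \leq \vbs$ and the surviving exponent is $\vbs - \kbs - \ibs = \vbs - \jbs$; applying Corollary~\ref{cor1_multi} once more gives $E[(-t \punt \mubs)^{\vbs - \jbs}] = q_{\scriptscriptstyle\jbs}(t)/\binom{\vbs}{\jbs}$. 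Substituting and multiplying through by $\binom{\vbs}{\kbs}$,
$$q_{\scriptscriptstyle\kbs}(t-1) = \sum_{\kbs \leq \jbs \leq \vbs} \frac{\binom{\vbs}{\kbs}\binom{\vbs - \kbs}{\jbs - \kbs}}{\binom{\vbs}{\jbs}}\, g_{\jbs - \kbs}\, q_{\scriptscriptstyle\jbs}(t).$$
Finally, the coordinatewise subset-of-a-subset identity $\binom{\vbs}{\kbs}\binom{\vbs - \kbs}{\jbs - \kbs} = \binom{\vbs}{\jbs}\binom{\jbs}{\kbs}$ (valid since multi-index binomials factor over coordinates) collapses the three binomials to $\binom{\jbs}{\kbs}$, leaving the asserted recurrence, with the copy $\mubs^{\prime}$ contributing the moment $g_{\jbs - \kbs}$.

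The routine part is the binomial arithmetic; the genuinely delicate step is the umbral manipulation $-(t-1)\punt \mubs \equiv -t \punt \mubs + \mubs^{\prime}$ together with the subsequent factoring of $E[\,\cdot\,]$ across the two summands. One must be careful that the freshly adjoined copy $\mubs^{\prime}$ is uncorrelated with $-t \punt \mubs$ (not merely with $t \punt \mubs$), and that the inverse umbra behaves additively under the real-parameter dot-product exactly as codified in the derivation of (\ref{condeval_dot_multi}); this is where I would spend the most care, checking it against the conventions fixed for the inverse tuple $-t \punt \mubs$ and the augmentation $\varbs$.
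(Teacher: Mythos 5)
Your argument is essentially identical to the paper's own proof: the same decomposition $-(t-1)\punt\mubs \equiv -t\punt\mubs + \mubs^{\prime}$, the same multi-index binomial expansion with factoring by uncorrelation, and the same reindexing via the subset-of-a-subset identity, with your treatment of the uncorrelated adjoined copy being the more careful of the two. Note that both your derivation and the paper's displayed computation terminate in $\sum_{\kbs \leq \jbs \leq \vbs} \binom{\jbs}{\kbs}\, g_{\jbs - \kbs}\, q_{\scriptscriptstyle\jbs}(t)$, whereas the proposition as printed reads $g_{\jbs}$; the two agree only when $\kbs = \boldsymbol{0}$, so the printed statement appears to carry a typo and your version is the one the proof actually supports.
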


\begin{proof}
The result follows from Corollary \ref{cor1_multi} as 
\begin{align*}
q_{\scriptscriptstyle\kbs}(t - 1) & =  \binom{\vbs}{\kbs}E[(-t 
\punt \mubs + \mubs)^{\vbs-\kbs}] = \binom{\vbs}{\kbs} \sum_{\jbs \leq \vbs - \kbs} \binom{\vbs-\kbs}{\jbs} E\left[ (-t \punt \mubs)^{\vbs-\kbs-\jbs}\right] 
g_{\jbs} \\
& = \sum_{\kbs \leq \ibs \leq \vbs} \binom{\ibs}{\kbs} g_{\ibs - \kbs} E\left[\binom{\vbs}{\ibs} (-t \punt \mubs)^{\vbs-\ibs}\right].
\end{align*}
\end{proof}

\begin{cor} $g_{\vbs} \, q_{\scriptscriptstyle\kbs}(t) = q_{\scriptscriptstyle{\boldsymbol 0}}(t-1) - \sum_{\jbs < \vbs} g_{\jbs} \, 
q_{\scriptscriptstyle\kbs}(t). $
\end{cor}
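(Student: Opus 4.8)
The plan is to obtain this identity as a direct specialization of Proposition \ref{prop1_multi}. First I would set $\kbs = \boldsymbol{0}$ in that recursion, which is legitimate as long as $\vbs \neq \boldsymbol{0}$ so that the hypothesis $\kbs < \vbs$ is satisfied. Since $\binom{\jbs}{\boldsymbol{0}} = 1$ for every multi-index $\jbs$, the binomial factor collapses and Proposition \ref{prop1_multi} reduces to an expansion of $q_{\scriptscriptstyle{\boldsymbol{0}}}(t-1)$ as a single sum over $\boldsymbol{0} \leq \jbs \leq \vbs$ of the products $g_{\jbs}\, q_{\scriptscriptstyle{\jbs}}(t)$.

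Next I would isolate the top term $\jbs = \vbs$ from that sum. The auxiliary fact needed here is that the leading coefficient satisfies $q_{\scriptscriptstyle{\vbs}}(t) = 1$: indeed, Corollary \ref{cor1_multi} gives $q_{\scriptscriptstyle{\vbs}}(t) = \binom{\vbs}{\vbs} E[(-t \punt \mubs)^{\boldsymbol{0}}] = 1$, since the $\boldsymbol{0}$-power is the constant $1$ and $E[1]=1$. Splitting the range $\boldsymbol{0} \leq \jbs \leq \vbs$ into the contribution $g_{\vbs}\, q_{\scriptscriptstyle{\vbs}}(t)$ at $\jbs = \vbs$ plus the residual terms $\sum_{\jbs < \vbs} g_{\jbs}\, q_{\scriptscriptstyle{\jbs}}(t)$, one transposition of terms produces exactly the asserted equality.

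Since this is bookkeeping on top of Proposition \ref{prop1_multi} rather than new content, I do not expect a genuine obstacle. The only points that demand care are notational: verifying that the binomial factor is truly $1$ at $\kbs = \boldsymbol{0}$ so that no stray coefficient survives, confirming that the summation range coincides with that of Proposition \ref{prop1_multi} specialized at $\kbs=\boldsymbol{0}$, and reading the subscripts on the $q$'s correctly so that the isolated term carries $q_{\scriptscriptstyle{\vbs}}$ while the residual sum carries $q_{\scriptscriptstyle{\jbs}}$. Once these are settled, the rearrangement into the stated form is immediate.
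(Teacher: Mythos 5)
Your proposal is correct and follows exactly the paper's own argument: specialize Proposition \ref{prop1_multi} at $\kbs=\boldsymbol{0}$, note $\binom{\jbs}{\boldsymbol 0}=1$, and split off the $\jbs=\vbs$ term from $q_{\scriptscriptstyle{\boldsymbol 0}}(t-1)=\sum_{\jbs\leq\vbs}g_{\jbs}\,q_{\scriptscriptstyle\jbs}(t)$. The only difference is your invocation of $q_{\scriptscriptstyle\vbs}(t)=1$, which is true but not actually needed, since the isolated term $g_{\vbs}\,q_{\scriptscriptstyle\vbs}(t)$ is already the left-hand side of the (typo-corrected) statement.
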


\begin{proof}
The result follows from Proposition \ref{prop1_multi}, as
\begin{align*}
q_{\scriptscriptstyle{\boldsymbol 0}}(t-1) & = \sum_{\jbs \leq \vbs} \binom{\jbs}{\boldsymbol 0} q_{\scriptscriptstyle\jbs}(t) \,
g_{\jbs} = \sum_{\jbs \leq \vbs} q_{\scriptscriptstyle\jbs}(t) \, g_{\jbs} = q_{\scriptscriptstyle\vbs}(t) g_{\vbs} + \sum_{\jbs < \vbs} q_{\scriptscriptstyle\jbs}(t) \, g_{\jbs}.
\end{align*}
\end{proof}

\begin{prop} 
$g_{\vbs} = q_{\boldsymbol 0}(t - 1) - \sum_{\kbs < \vbs} q_{\scriptscriptstyle\kbs}(t) \, g_{\kbs}.$
\end{prop}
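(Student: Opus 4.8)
The plan is to read the claim off the immediately preceding Corollary, whose right-hand side already matches the desired expression once the leading coefficient $q_{\scriptscriptstyle\vbs}(t)$ is identified.

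First I would pin down that coefficient. By Corollary \ref{cor1_multi}, $q_{\scriptscriptstyle\kbs}(t) = \binom{\vbs}{\kbs} E[(-t \punt \mubs)^{\vbs - \kbs}]$ for all $\kbs \leq \vbs$; specializing to $\kbs = \vbs$ gives $q_{\scriptscriptstyle\vbs}(t) = \binom{\vbs}{\vbs} E[(-t \punt \mubs)^{\boldsymbol 0}] = 1$, because $\binom{\vbs}{\vbs} = 1$ and the zeroth multivariate moment of any $d$-tuple is $1$. Hence $q_{\scriptscriptstyle\vbs}(t) \equiv 1$ for every $t \geq 0$.

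Next I would invoke the preceding Corollary in the form $g_{\vbs}\, q_{\scriptscriptstyle\vbs}(t) = q_{\scriptscriptstyle{\boldsymbol 0}}(t-1) - \sum_{\kbs < \vbs} g_{\kbs}\, q_{\scriptscriptstyle\kbs}(t)$ and substitute $q_{\scriptscriptstyle\vbs}(t) = 1$ on the left, which at once yields $g_{\vbs} = q_{\scriptscriptstyle{\boldsymbol 0}}(t-1) - \sum_{\kbs < \vbs} q_{\scriptscriptstyle\kbs}(t)\, g_{\kbs}$. To keep the derivation self-contained, I would alternatively start from Proposition \ref{prop1_multi} evaluated at $\kbs = \boldsymbol 0$, namely $q_{\scriptscriptstyle{\boldsymbol 0}}(t-1) = \sum_{\boldsymbol 0 \leq \jbs \leq \vbs} \binom{\jbs}{\boldsymbol 0} g_{\jbs}\, q_{\scriptscriptstyle\jbs}(t) = \sum_{\jbs \leq \vbs} g_{\jbs}\, q_{\scriptscriptstyle\jbs}(t)$, split off the top term $\jbs = \vbs$, and use $q_{\scriptscriptstyle\vbs}(t) = 1$ to isolate $g_{\vbs}$.

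The computation is entirely routine, so I do not expect a genuine obstacle beyond bookkeeping with the multi-index conventions ($\binom{\vbs}{\vbs} = 1$, $g_{\boldsymbol 0} = 1$, and the meaning of $\kbs < \vbs$). The single point that must be handled with care is the normalization $q_{\scriptscriptstyle\vbs}(t) = 1$: it is precisely the fact that the leading coefficient of the time-space harmonic polynomial $Q_{\vbs}(\xbs,t)$ is the constant $1$ that collapses the preceding Corollary's left-hand side $g_{\vbs}\, q_{\scriptscriptstyle\vbs}(t)$ to $g_{\vbs}$, thereby turning the identity into an explicit expression for the moment $g_{\vbs}$ in terms of the coefficients $q_{\scriptscriptstyle\kbs}(t)$ with $\kbs < \vbs$.
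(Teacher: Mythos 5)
Your proposal is correct and amounts to the same argument as the paper's: the paper proves the identity by expanding $\sum_{\kbs < \vbs} q_{\scriptscriptstyle\kbs}(t)\,g_{\kbs}$ via Corollary \ref{cor1_multi} into $E[(-(t-1)\punt\mubs)^{\vbs}] - g_{\vbs} = q_{\scriptscriptstyle{\boldsymbol 0}}(t-1) - g_{\vbs}$, which is exactly the content of the preceding corollary that you cite, combined with your (correct and worth making explicit) observation that the leading coefficient satisfies $q_{\scriptscriptstyle\vbs}(t) = \binom{\vbs}{\vbs}E[(-t\punt\mubs)^{\boldsymbol 0}] = 1$. The only difference is bookkeeping: you reuse the already-proved corollary where the paper re-derives it in one line from Corollary \ref{cor1_multi} and the umbral relation $-t\punt\mubs + \mubs \equiv -(t-1)\punt\mubs$.
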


\begin{proof}
From Corollary \ref{cor1_multi}, we have
$$\sum_{\kbs < \vbs}  q_{\scriptscriptstyle\kbs}(t) \, g_{\kbs} = \sum_{\kbs \leq \vbs} \binom{\vbs}{\kbs} E[(-t \punt \mubs)^{\vbs-\kbs}] \, g_{\kbs} - g_{\vbs} 
= E[(-(t-1) \punt \mubs)^{\vbs}] - g_{\vbs}.$$
The result follows by applying again Corollary \ref{cor1_multi} to $E[(-(t-1) \punt \mubs)^{\vbs}].$
\end{proof}
The following theorem characterizes the coefficients of any multivariate time-space harmonic polynomial. 
\begin{thm} \label{comb_multi}
A polynomial 
\begin{equation}
P(\xbs,t) =\sum_{\kbs \leq \vbs} p_{\scriptscriptstyle\kbs}(t) \, \xbs^{\kbs}
\label{polynomial}
\end{equation}
is a time-space harmonic polynomial with respect to $\{t \punt \mubs\}_{t \geq 0}$ if and only if
\begin{equation}
p_{\scriptscriptstyle\kbs}(t) = \sum_{\kbs \leq \ibs \leq \vbs}  \binom{\ibs}{\kbs} \, p_{\scriptscriptstyle\kbs}(0) \, 
E[(-t \punt \mubs)^{\ibs-\kbs}], \quad \hbox{ for } \kbs \leq \vbs. 
\label{prop5_multi}
\end{equation}
\end{thm}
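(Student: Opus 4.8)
The plan is to prove both implications by showing that a polynomial is time-space harmonic with respect to $\{t \punt \mubs\}_{t \geq 0}$ if and only if it is a fixed real-linear combination of the basis polynomials $Q_{\ibs}(\xbs,t)$ constructed in Theorem \ref{UTSH2_multi}, the combination being dictated by its values at $t=0$. The two facts I would lean on are that each $Q_{\ibs}$ is time-space harmonic (Theorem \ref{UTSH2_multi}) and that $Q_{\ibs}(\xbs,0)=\xbs^{\ibs}$, which follows from Corollary \ref{cor1_multi} since $0\punt\mubs\equiv\varbs$. Together with the fact that the coefficient of $\xbs^{\kbs}$ in $Q_{\ibs}$ is $\binom{\ibs}{\kbs}E[(-t\punt\mubs)^{\ibs-\kbs}]$, the right-hand side of (\ref{prop5_multi}) is precisely the coefficient of $\xbs^{\kbs}$ in $\sum_{\ibs\le\vbs}p_{\ibs}(0)\,Q_{\ibs}(\xbs,t)$.

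For sufficiency I would note that the coefficient identity (\ref{prop5_multi}), read through (\ref{coeff1_multi}) and Corollary \ref{cor1_multi}, is equivalent to the single statement $P(\xbs,t)=\sum_{\ibs\le\vbs}p_{\ibs}(0)\,Q_{\ibs}(\xbs,t)$. Each $Q_{\ibs}$ satisfies (\ref{def_tsh_multi}) by Theorem \ref{UTSH2_multi}, and since the conditional evaluation $E(\,\cdot \mid s\punt\mubs)$ is linear, the same linear combination satisfies (\ref{def_tsh_multi}); hence $P$ is time-space harmonic.

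For necessity I would put $R(\xbs,t)=P(\xbs,t)-\sum_{\ibs\le\vbs}p_{\ibs}(0)\,Q_{\ibs}(\xbs,t)=\sum_{\kbs\le\vbs}r_{\kbs}(t)\,\xbs^{\kbs}$. By linearity $R$ is time-space harmonic, and because $Q_{\ibs}(\xbs,0)=\xbs^{\ibs}$ we get $R(\xbs,0)=0$, i.e. $r_{\kbs}(0)=0$ for all $\kbs\le\vbs$. It then suffices to show that a time-space harmonic polynomial vanishing at $t=0$ is identically zero. I would substitute $\xbs=t\punt\mubs$, apply $E(\,\cdot \mid s\punt\mubs)$ termwise, and rewrite each $E[(t\punt\mubs)^{\kbs}\mid s\punt\mubs]$ by (\ref{condeval_dot_multi}) as a polynomial in the monomials $(s\punt\mubs)^{\jbs}$. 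Comparing with $R(s\punt\mubs,s)=\sum_{\jbs}r_{\jbs}(s)(s\punt\mubs)^{\jbs}$ and matching the coefficient of each $(s\punt\mubs)^{\jbs}$ yields
\begin{equation*}
r_{\jbs}(s)=\sum_{\jbs\le\kbs\le\vbs}\binom{\kbs}{\jbs}\,r_{\kbs}(t)\,E[\{(t-s)\punt\mubs\}^{\kbs-\jbs}],\qquad 0\le s\le t .
\end{equation*}

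Reading this recursion at $\jbs=\vbs$ gives $r_{\vbs}(s)=r_{\vbs}(t)$, so $r_{\vbs}$ is constant and therefore $\equiv 0$ since $r_{\vbs}(0)=0$. A downward induction over the finite poset $\{\kbs\le\vbs\}$ then finishes it: for a given $\jbs$, every term with $\jbs<\kbs$ has already been shown to vanish, leaving only the $\kbs=\jbs$ term $\binom{\jbs}{\jbs}r_{\jbs}(t)E[\{(t-s)\punt\mubs\}^{\boldsymbol{0}}]=r_{\jbs}(t)$, whence $r_{\jbs}(s)=r_{\jbs}(t)$ is constant and $\equiv 0$ by $r_{\jbs}(0)=0$. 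Thus $R\equiv 0$, so $P(\xbs,t)=\sum_{\ibs}p_{\ibs}(0)\,Q_{\ibs}(\xbs,t)$, and extracting the coefficient of $\xbs^{\kbs}$ by Corollary \ref{cor1_multi} gives (\ref{prop5_multi}). The delicate point, which I expect to be the main obstacle, is the coefficient-matching step: one must justify that after conditioning on $s\punt\mubs$ the powers $(s\punt\mubs)^{\jbs}$ are linearly independent formal monomials, so that the identity in (\ref{def_tsh_multi}) can legitimately be read off coefficientwise. This is exactly what is guaranteed by the definition of the conditional evaluation together with (\ref{condeval_dot_multi}), which already expresses the conditioned moment as an honest polynomial in those monomials.
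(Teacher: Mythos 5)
Your sufficiency argument is the same as the paper's: rewrite (\ref{prop5_multi}) as the identity $P(\xbs,t)=\sum_{\ibs\leq\vbs}p_{\ibs}(0)\,Q_{\ibs}(\xbs,t)$ and invoke Theorem \ref{UTSH2_multi} plus linearity of the conditional evaluation. (Incidentally, you have silently corrected what is evidently a typo in (\ref{prop5_multi}): the factor inside the sum must be $p_{\ibs}(0)$, not $p_{\kbs}(0)$, as the paper's own display (\ref{(comb)}) confirms after swapping the order of summation.)

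For necessity you genuinely diverge from, and improve on, the paper. The paper's converse is essentially an assertion: it states that a time-space harmonic $P$ ``must'' be of the form $\sum_{\kbs\leq\vbs}c_{\kbs}E[(\xbs-t\punt\mubs)^{\kbs}]$ with constant coefficients $c_{\kbs}$, and reads off (\ref{prop5_multi}); no argument is offered for why the definition (\ref{def_tsh_multi}) forces this representation. You supply the missing argument: subtract $\sum_{\ibs\leq\vbs}p_{\ibs}(0)Q_{\ibs}(\xbs,t)$, observe via Corollary \ref{cor1_multi} that $Q_{\ibs}(\xbs,0)=\xbs^{\ibs}$ so the difference $R$ vanishes at $t=0$, and then prove that a time-space harmonic polynomial vanishing at $t=0$ is identically zero by expanding (\ref{def_tsh_multi}) through (\ref{condeval_dot_multi}) and running a downward induction on the finite poset $\{\kbs\leq\vbs\}$; the recursion is triangular because the $\kbs=\jbs$ term contributes exactly $r_{\jbs}(t)$. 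This is correct and is the honest content of the theorem. The one point you rightly flag --- that the identity in $E(\,\cdot\mid s\punt\mubs)$ may be read coefficientwise in the monomials $(s\punt\mubs)^{\jbs}$ --- is indeed the load-bearing convention of the whole construction: if those formal monomials were not treated as linearly independent, condition (\ref{def_tsh_multi}) itself would be ambiguous, so your use of it is no stronger than the paper's. Net effect: your proof costs a page of bookkeeping that the paper skips, and buys a complete converse; the paper's version is shorter only because it leaves that direction unproved.
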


\begin{proof}
Suppose $P(\xbs,t)$ as in \eqref{polynomial} with coefficients \eqref{prop5_multi}. Suitably rearranging the 
terms in the summation (\ref{prop5_multi}), we have
\begin{align}
P(\xbs,t) = \sum_{\kbs \leq \vbs} p_{\scriptscriptstyle\kbs}(0) \sum_{\ibs \leq \kbs} \binom{\kbs}{\ibs} E[(-t \punt \mubs)^{\kbs-\ibs}] 
\xbs^{\ibs} = \sum_{\kbs \leq \vbs} p_{\scriptscriptstyle\kbs}(0) Q_{\kbs}(\xbs, t) \label{(comb)}.
\end{align}
Since $P(\xbs,t)$ is a linear combination of time-space harmonic polynomials $ Q_{\kbs}(\xbs, t),$ then  $P(\xbs,t)$ is in turn time-space harmonic.
Vice versa, suppose $P(\xbs,t)$ in \eqref{polynomial} a time-space harmonic polynomial with respect to $\{t \punt \mubs\}_{t \geq 0}.$ Then, there exist some coefficients $\{ c_{\kbs}\}$ such that 
$$
P(\xbs,t)  = \sum_{\kbs \leq \vbs} c_{\kbs} E[(\xbs - t \punt \mubs)^{\kbs}]  = \sum_{\kbs \leq \vbs} \left( \sum_{\kbs \leq \jbs \leq \vbs} 
\binom{\jbs}{\kbs} c_{\kbs} E[(-t \punt \mubs)^{\jbs - \kbs}] \right) \xbs^{\kbs}
$$
so that equality (\ref{prop5_multi}) follows.
\end{proof}
Theorem \ref{comb_multi} states a more general result: thanks to (\ref{(comb)}), any polynomial which is time-space harmonic with respect 
to the family $\{t \punt \mubs\}_{t \geq 0}$ can be expressed as a linear combination of the polynomials $Q_{\vbs}(\xbs,t).$
From (\ref{(comb)}), the coefficients of such a linear combination are
$$p_{\scriptscriptstyle\kbs}(0) = \sum_{\kbs \leq \jbs \leq \vbs}  \binom{\jbs}{\kbs} \, p_{\scriptscriptstyle\kbs}(0) \, E[\boldsymbol{\varepsilon}^{\jbs-\kbs}]$$
as $ 0 \punt \mubs \equiv \boldsymbol{\varepsilon}.$ Next section is devoted to some examples of multivariate time-space harmonic polynomials. 
In particular we generalize the class of  L\'evy-Sheffer systems and solve an open problem
addressed in \cite{BE} involving multivariate Bernoulli and Euler polynomials.  

%------------------------------------------------------------------------------------
\subsection{Examples}
%------------------------------------------------------------------------------------
%
%------------------------------------------------------------------------------------
\paragraph{Multivariate L\'evy-Sheffer systems.}
%------------------------------------------------------------------------------------
\begin{defn} \label{MTS}
A sequence of multivariate polynomials $\{V_{\kbs}(\xbs,t)\}_{t \geq 0}$ is a \emph{multivariate L\'evy-Sheffer system} if 
$$
1 + \sum_{k \geq 1} \sum_{\substack{\vbs \in \mathbb{N}_0^d \\ |\vbs| = k}} V_{\kbs}(\xbs,t)\frac{\zbs^{\kbs}}{\kbs!} =  [g(\zbs)]^t 
\exp\{ (x_1 + \cdots + x_d) [h(\zbs) - 1] \}
$$
where $g(\zbs)$ and $h(\zbs)$ are analytic in a neighborhood of $\zbs =\boldsymbol{0}$ and 
$$\left. \frac{\partial}{\partial z_i} h(\zbs) \right|_{\zbs = \boldsymbol{0}} \neq 0 \quad \hbox{for 
$i=1,2,\ldots,d$}.$$ 
\end{defn}
\begin{prop}
If $\mubs$ and $\nubs$ are $d$-tuples of umbral monomials such that $f(\mubs,\zbs)=g(\zbs)$ and $f(\nubs,\zbs)=h(\zbs)$ respectively, then 
\begin{equation}
V_{\kbs}(\xbs,t) = E[(t \punt \mubs +  (x_1 + \cdots + x_d) \punt \beta \punt \nubs)^{\kbs}].
\label{(LS)}
\end{equation}
\end{prop}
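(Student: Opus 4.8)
The plan is to recognize the right-hand side of \eqref{(LS)} as the multivariate moment of the composite umbra $\gammabs = t \punt \mubs + (x_1 + \cdots + x_d) \punt \beta \punt \nubs$ and then to identify $V_{\kbs}(\xbs,t)$ with that moment by a generating-function comparison. The starting observation is that, for any $d$-tuple $\gammabs$, its multivariate moments are exactly the coefficients of its generating function: $f(\gammabs, \zbs) = 1 + \sum_{k \geq 1} \sum_{|\vbs|=k} E[\gammabs^{\vbs}] \, \zbs^{\vbs}/\vbs!$. Hence, once I show that $f(\gammabs, \zbs)$ equals the series defining the L\'evy--Sheffer system in Definition \ref{MTS}, matching the coefficient of $\zbs^{\kbs}/\kbs!$ on both sides yields \eqref{(LS)} immediately.

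The next step is to evaluate $f(\gammabs, \zbs)$ factor by factor. Using the multivariate version of $f(t \punt \mubs, \zbs) = [f(\mubs,\zbs)]^t$ together with the hypothesis $f(\mubs,\zbs)=g(\zbs)$, I get $f(t \punt \mubs, \zbs) = [g(\zbs)]^t$. For the second summand I treat the polynomial $x_1+\cdots+x_d$ as the scalar parameter in the dot-product, so that the multivariate analogue of \eqref{(genfun1)} gives $f((x_1+\cdots+x_d) \punt \beta \punt \nubs, \zbs) = \exp\{(x_1+\cdots+x_d)[f(\nubs,\zbs)-1]\}$, which by $f(\nubs,\zbs)=h(\zbs)$ equals $\exp\{(x_1+\cdots+x_d)[h(\zbs)-1]\}$.

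Finally, because $\mubs$ and $\nubs$ are uncorrelated $d$-tuples, the auxiliary umbrae $t \punt \mubs$ and $(x_1+\cdots+x_d) \punt \beta \punt \nubs$ are uncorrelated, so the generating function of their sum factorizes into the product of the two generating functions just computed, namely $[g(\zbs)]^t \exp\{(x_1+\cdots+x_d)[h(\zbs)-1]\}$. This is precisely the right-hand side in Definition \ref{MTS}, and reading off the coefficient of $\zbs^{\kbs}/\kbs!$ gives \eqref{(LS)}. I expect the only point needing care to be the two structural facts used here---that the generating function of a sum of uncorrelated $d$-tuples is the product of their generating functions, and that a polynomial in the $x_i$ may legitimately play the role of the scalar in the dot-product---but both follow directly from the uncorrelation property of $E$ and the coefficientwise extension of umbral equivalence established in Section 2, so no genuinely new computation is required.
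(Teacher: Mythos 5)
Your proof is correct and follows essentially the same route as the paper: both compute $f(t \punt \mubs,\zbs)=[g(\zbs)]^t$ and $f((x_1+\cdots+x_d)\punt\beta\punt\nubs,\zbs)=\exp\{(x_1+\cdots+x_d)[h(\zbs)-1]\}$, multiply the generating functions of the (uncorrelated) summands, and match coefficients against Definition~\ref{MTS}. You merely make explicit the coefficient-matching and uncorrelation steps that the paper leaves implicit.
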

\begin{proof}
The result follows from Definition \ref{MTS}, since $f(t \punt \mubs, \zbs) = 
f(\mubs, \zbs)^t$ and the auxiliary umbra $(x_1 + \cdots + x_d)\punt \beta \punt \nubs$ has  
generating function \cite{dibruno} 
$f((x_1 + \cdots + x_d) \punt \beta \punt \nubs, \zbs) = \exp\{ (x_1 + \cdots + x_d) [f(\nubs, \zbs) - 1] \}.$
\end{proof}

The sequence $\{V_{\kbs}(\xbs,t)\}_{t \geq 0}$ is said to be a multivariate L\'evy-Sheffer system for the pair $\mubs$  and $\nubs.$ Equation (\ref{(LS)}) allows us to characterize the polynomial
$V_{\kbs}(\xbs,t)$ as a multivariate time-space harmonic polynomial with respect to a suitable symbolic L\'evy 
process. 

To this aim we need to introduce the \emph{multivariate compositional inverse} of a $d$-tuple $\nubs.$ 
Assume  $\chibs_{(i)}$ the $d$-tuple with all components equal to the augmentation umbra and only the $i$-th one equal
to the singleton umbra, that is $\chibs_{(i)} = (\varepsilon, \ldots, \chi, \ldots, \varepsilon).$
The multivariate compositional inverse of $\nubs$ is the umbral $d$-tuple $\deltabs=(\delta_1, \ldots, \delta_d)$ 
such that $\delta_i \punt \beta \punt \nubs \equiv \chibs_{(i)},$ for $i = 1, \ldots, d.$ By analogy
with the compositional inverse of an umbra, we denote the $d$-tuple $\deltabs$ with the symbol
$\nubs^{{\scriptscriptstyle <-1>}}$ and its $i$-th umbral monomial component with the symbol
$\nu_i^{{\scriptscriptstyle <-1>}}.$ 

\begin{thm}
The multivariate L\'evy-Sheffer polynomials  for the pair $\mubs$  and $\nubs$ are time-space harmonic  with respect to the symbolic multivariate L\'evy process 
$\{t \punt (\mu_1 \punt \beta \punt \nu_1^{{\scriptscriptstyle <-1>}} + \cdots + \mu_d \punt \beta \punt \nu_d^{{\scriptscriptstyle <-1>}})\}_{t \geq 0}.$
\end{thm}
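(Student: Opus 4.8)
The plan is to reduce the statement to Theorem \ref{comb_multi}. By that theorem, together with the remark following it, every $(\xbs,t)$-independent linear combination of the basic polynomials $Q_{\jbs}(\xbs,t)=E[(\xbs-t\punt\gammabs)^{\jbs}]$ attached to $\{t\punt\gammabs\}$ (which are time-space harmonic by Theorem \ref{UTSH2_multi}) is again time-space harmonic. Hence it suffices to write each Sheffer polynomial $V_{\kbs}(\xbs,t)$ as one such combination. I would carry this out at the level of exponential generating functions: by (\ref{(LS)}) the family $\{V_{\kbs}\}$ has generating function $f(\mubs,\zbs)^{t}\exp\{(x_1+\cdots+x_d)[f(\nubs,\zbs)-1]\}$, whereas the family $\{Q_{\jbs}\}$ for $\gammabs$ has generating function $e^{\xbs\zbs^{\trasp}}\,f(-t\punt\gammabs,\zbs)$. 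The task then becomes to obtain the first series from the second by a single $(\xbs,t)$-independent substitution of the indeterminate $\zbs$, since such a substitution is precisely what produces $(\xbs,t)$-independent linear combinations of the $Q_{\jbs}$.

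The substitution to use is dictated by the multivariate compositional inverse $\nubs^{\scriptscriptstyle <-1>}$, whose defining relations $\nu_i^{\scriptscriptstyle <-1>}\punt\beta\punt\nubs\equiv\chibs_{(i)}$ say that composing the $i$-th building block of $\gammabs$ on the right with $\nubs$ produces the singleton $\chibs_{(i)}$, the carrier of the coordinate $z_i$. The identity I would establish, and which drives the whole argument, is
$$\gammabs\punt\beta\punt\nubs\equiv\mubs,$$
obtained by distributing the composition over the disjoint sum $\gammabs=\mu_1\punt\beta\punt\nu_1^{\scriptscriptstyle <-1>}+\cdots+\mu_d\punt\beta\punt\nu_d^{\scriptscriptstyle <-1>}$ and collapsing each summand $\mu_i\punt\beta\punt\nu_i^{\scriptscriptstyle <-1>}\punt\beta\punt\nubs$ through $\nu_i^{\scriptscriptstyle <-1>}\punt\beta\punt\nubs\equiv\chibs_{(i)}$ to its $i$-th coordinate piece, the pieces recombining into $\mubs$. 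This is the relation that links the factor $f(\mubs,\zbs)^{t}$ of the Sheffer generating function to the process factor carrying $t\punt\gammabs$ once $\zbs$ is substituted through $f(\nubs,\zbs)-1$, while the same substitution turns the linear form $(x_1+\cdots+x_d)[f(\nubs,\zbs)-1]$ into the $\xbs\zbs^{\trasp}$ of the $Q_{\jbs}$ generating function.

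The step I expect to be the main obstacle is the bookkeeping of the multivariate composition. In contrast with the univariate case, $\gammabs$ is built coordinate by coordinate out of the composition umbrae $\mu_i\punt\beta\punt\nu_i^{\scriptscriptstyle <-1>}$, whose supports overlap; one must therefore justify carefully that right composition with $\beta\punt\nubs$ distributes over the disjoint sum and that each piece collapses through $\nu_i^{\scriptscriptstyle <-1>}\punt\beta\punt\nubs\equiv\chibs_{(i)}$ so as to reconstruct exactly $\mubs$, keeping track of the correlations among $\mu_1,\dots,\mu_d$ and of the precise power of $t$ (equivalently, the orientation of the inverse umbra $-t\punt\gammabs$) that makes the two generating functions coincide. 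Once $\gammabs\punt\beta\punt\nubs\equiv\mubs$ and the substitution are in place, reading off coefficients exhibits $V_{\kbs}$ as an $(\xbs,t)$-independent combination of the $Q_{\jbs}$, and Theorem \ref{comb_multi} completes the proof. As a cross-check, or as an alternative route avoiding the explicit expansion, I would verify the conditional-evaluation identity (\ref{def_tsh_multi}) for $\{t\punt\gammabs\}$ directly, propagating the conditioning by means of (\ref{condeval_dot_multi}) and the same compositional identity.
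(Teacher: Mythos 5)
Your proposal is correct and follows essentially the same route as the paper: the pivotal identity $\mubs \equiv (\mu_1\punt\beta\punt\nu_1^{\scriptscriptstyle <-1>}+\cdots+\mu_d\punt\beta\punt\nu_d^{\scriptscriptstyle <-1>})\punt\beta\punt\nubs$, obtained from $\mubs\equiv\mu_1\punt\beta\punt\chibs_{(1)}+\cdots+\mu_d\punt\beta\punt\chibs_{(d)}$ together with $\nu_i^{\scriptscriptstyle <-1>}\punt\beta\punt\nubs\equiv\chibs_{(i)}$, is exactly the one the paper uses, and both arguments conclude by expanding the resulting composition to exhibit $V_{\kbs}(\xbs,t)$ as an $(\xbs,t)$-independent linear combination of the basic time-space harmonic polynomials, i.e.\ by the first half of Theorem \ref{comb_multi}. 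The distributivity bookkeeping and the sign/orientation of the inverse umbra that you flag as the main obstacles are precisely the (tersely handled) steps of the paper's own derivation, so your plan fills in rather than diverges from it.
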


\begin{proof}
As $f(\chibs_{(i)}, \zbs) = 1 + z_i$ for  $i = 1, \ldots, d,$ then  
$f(\mubs,\zbs)  = f(\mubs, (f(\chibs_{(1)}, \zbs) - 1, \ldots, f(\chibs_{(d)}, \zbs) - 1))$
so that $\mubs \equiv \mu_1 \punt \beta \punt \chibs_{(1)} + \cdots + \mu_d \punt \beta \punt 
\chibs_{(d)},$ see \cite{dibruno}. Thanks to the distributive property, from (\ref{(LS)}) 
\begin{eqnarray}
& & t \punt \boldsymbol\mubs +  (x_1 + \cdots + x_d) \punt \beta \punt \nubs \nonumber \\
& \equiv & t \punt (\mu_1 \punt \beta \punt \chibs_{(1)} + \cdots + \mu_d \punt \beta \punt \chibs_{(d)}) + (x_1 + \cdots + x_d) \punt \beta \punt \nubs \nonumber \\
& \equiv & t \punt (\mu_1 \punt \beta \punt \nu_1^{{\scriptscriptstyle <-1>}} \punt \beta \punt \nubs + \cdots + \mu_d \punt \beta \punt \nu_d^{{\scriptscriptstyle <-1>}} \punt \beta \punt \nubs)  + (x_1 + \cdots + x_d) \punt \beta \punt \nubs \nonumber  \\ 
& \equiv & [t \punt (\mu_1 \punt \beta \punt \nu_1^{{\scriptscriptstyle <-1>}} + \cdots + \mu_d \punt \beta \punt \nu_d^{{\scriptscriptstyle <-1>}}) + (x_1 + \cdots + x_d)] \punt \beta \punt \nubs.
\label{LS1}
\end{eqnarray}
Now in (\ref{(tshumbral_multi)}), set $-t \punt \mubs = \boldsymbol{\eta}.$ Then a multivariate time-space harmonic polynomial $Q_{\vbs}(\xbs,t)$ is such that
$Q_{\vbs}(\xbs,t) = E[ (x_1 + \eta_1)^{v_1} (x_2 + \eta_2)^{v_2} \cdots (x_d + \eta_d)^{v_d} ].$ So any multivariate time-space harmonic polynomial is linear combination of 
products $E[ (x_1 + \eta_1)^{v_1} (x_2 + \eta_2)^{v_2} \cdots (x_d + \eta_d)^{v_d}].$ The same results by expanding (\ref{LS1}) via (\ref{composition_multi}) and by observing
that
$$ V_k(\xbs,t) \simeq \{[(t \punt \mu_1 \punt \beta \punt \nu_1^{{\scriptscriptstyle <-1>}} + x_1)+ \cdots + (t \punt \mu_d \punt \beta \punt \nu_d^{{\scriptscriptstyle <-1>}}+ x_d )] \punt \beta \punt \nubs\}^{\kbs}.$$
\end{proof}

%------------------------------------------------------------------------------------
\paragraph{Multivariate Hermite polynomials.}
%----------------------------------------------------------------------------------
Let us consider the multivariate nonstandard Brownian motion $\{\X_t\}_{t \geq 0}$ such that $\X_t = C \, {\boldsymbol B}_t,$  where $C$ is a $d \times d$ matrix, whose determinant is not zero, and $\{{\boldsymbol B}_t\}_{t \geq 0}$ is the multivariate standard Brownian motion in ${\mathbb R}^d$ \cite{sato}. The moment generating function of $\{\X_t\}_{t \geq 0}$ is 
\begin{equation}
\varphi_t(\zbs) = \exp\left( 
\frac{t}{2}\zbs\Sigma\zbs^{T}\right), \quad \Sigma=C C^T.
\label{mgfmb}
\end{equation}
The multivariate nonstandard Brownian motion $\{\X_t\}_{t \geq 0}$ is a special multivariate L\'evy process. Indeed, as
shown in \cite{multilevy}, a multivariate L\'evy process admits a different symbolic representation 
with respect to $t \punt \mubs,$ if one prefers to highlight the role played by its multivariate cumulants. Indeed  any $d$-tuple $\mubs$ is such that $\mubs \equiv \beta \punt 
\boldsymbol{c}_{\mubs}$ with $\boldsymbol{c}_{\mubs}$ a $d$-tuple whose moments are cumulants
of the sequence $\{g_{\vbs}\}_{\vbs \in {\mathbb N}_0^d}.$ From (\ref{mgfmb}) we have
$$f(\boldsymbol{c}_{\mubs}, \zbs) = 1 + \frac{1}{2} \, \zbs\Sigma\zbs^{T}.$$
By considering the $d$-tuple $\deltabs$ such that $f(\deltabs,\zbs) = 1 +\zbs\zbs^{T}/2,$
then every multivariate nonstandard Brownian motion with covariance matrix $\Sigma$ is umbrally 
represented by the family of auxiliary umbrae $\{t \punt \beta \punt (\deltabs C^{T})\}_{t \geq 0}.$

There is a special family of multivariate polynomials which is strictly related to multivariate nonstandard Brownian motion: the generalized Hermite polynomials. The $\vbs$-th Hermite polynomial $H_{\vbs}(\boldsymbol{x}, \Sigma)$ is defined as
$$H_{\vbs}(\boldsymbol{x}, \Sigma) = (-1)^{|\vbs|} \frac{D_{\boldsymbol{x}}^{(\vbs)} \phi(\boldsymbol{x}; \boldsymbol{0}, \Sigma)}{{\phi(\boldsymbol{x}; \boldsymbol{0}, \Sigma)}},$$
where $\phi(\boldsymbol{x}; \boldsymbol{0}, \Sigma)$ denotes the multivariate Gaussian
density with $\boldsymbol{0}$ mean and covariance matrix $\Sigma$ of full rank $d.$

We consider the polynomials $\tilde{H}_{\vbs}(\boldsymbol{x}, \Sigma) = H_{\vbs}(\boldsymbol{x} \Sigma^{-1}, \Sigma^{-1})$
which are orthogonal with respect to $\phi(\boldsymbol{x}; \boldsymbol{0}, \Sigma),$ where $\Sigma^{-1}$ denotes the inverse  of $\Sigma.$

\begin{thm} \label{Her}
The family $\{\tilde{H}_{\vbs}^{(t)} (\xbs, \Sigma)\}_{t \geq 0}$ of generalized multivariate Hermite polynomials is time-space harmonic with 
respect to the symbolic nonstandard multivariate Brownian motion  $\{t \punt \beta \punt (\debs C^{\trasp})\}_{t \geq 0}.$
\end{thm}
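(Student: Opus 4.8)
We need to show the generalized multivariate Hermite polynomials $\tilde{H}_{\vbs}^{(t)}(\xbs, \Sigma)$ are time-space harmonic with respect to a specific symbolic Brownian motion $\{t \punt \beta \punt (\debs C^{\trasp})\}_{t \geq 0}$.

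**Key earlier results:**
- Theorem \ref{UTSH2_multi}: $Q_{\vbs}(\xbs,t) = E[(\xbs - t \punt \mubs)^{\vbs}]$ is TSH.
- Theorem \ref{comb_multi}: a polynomial is TSH iff its coefficients satisfy (\ref{prop5_multi}); and crucially (\ref{(comb)}) shows TSH polys are exactly linear combinations of $Q_{\vbs}$.
- The Brownian motion is represented by $t \punt \beta \punt (\debs C^{\trasp})$ with $f(\debs,\zbs)=1+\zbs\zbs^{T}/2$.

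**My proof strategy:** The natural approach is to show that the generalized Hermite polynomials $\tilde{H}_{\vbs}^{(t)}(\xbs,\Sigma)$ coincide (up to the right time-parametrization) with the umbral polynomials $Q_{\vbs}(\xbs,t)$ for the specific $\mubs = \beta \punt (\debs C^{\trasp})$. If I can establish $\tilde{H}_{\vbs}^{(t)}(\xbs,\Sigma) = Q_{\vbs}(\xbs,t)$ (or a linear combination thereof), then Theorem \ref{UTSH2_multi} (or \ref{comb_multi}) gives the result immediately.

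Let me think about the plan concretely.

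The plan is to show that every generalized Hermite polynomial $\tilde{H}_{\vbs}^{(t)}(\xbs, \Sigma)$ is a linear combination, with coefficients depending on $\Sigma$ but \emph{not} on $t$, of the canonical time-space harmonic polynomials $Q_{\kbs}(\xbs,t) = E[(\xbs - t \punt \mubs)^{\kbs}]$ attached to the Brownian umbra $\mubs = \beta \punt (\debs C^{\trasp})$. Since each $Q_{\kbs}$ is time-space harmonic by Theorem \ref{UTSH2_multi} and the defining relation (\ref{def_tsh_multi}) is linear in the polynomial, any such constant-coefficient combination is again time-space harmonic with respect to the same process $\{t \punt \mubs\}_{t \geq 0}$. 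This reduces the statement to an identity between two generating functions.

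First I would record those generating functions. Using (\ref{(genfun1)}) together with $f(\debs C^{\trasp}, \zbs) = f(\debs, \zbs C) = 1 + \tfrac12 \zbs\Sigma\zbs^{\trasp}$, one gets $f(t \punt \beta \punt (\debs C^{\trasp}), \zbs) = \exp\{\tfrac{t}{2}\zbs\Sigma\zbs^{\trasp}\}$, so that $\mubs = \beta \punt (\debs C^{\trasp})$ is exactly the umbra of Theorem \ref{primo} for the process (\ref{mgfmb}) and $t \punt \mubs$ represents $N(\boldsymbol{0}, t\Sigma)$. Next, since $-t \punt \mubs$ is the inverse umbra we have $f(-t \punt \mubs,\zbs) = 1/f(t \punt \mubs,\zbs) = \exp\{-\tfrac{t}{2}\zbs\Sigma\zbs^{\trasp}\}$, and summing (\ref{(tshumbral_multi)}) against $\zbs^{\vbs}/\vbs!$ yields
\[
\sum_{\vbs} Q_{\vbs}(\xbs,t)\frac{\zbs^{\vbs}}{\vbs!} = e^{\xbs\zbs^{\trasp}}\, f(-t \punt \mubs,\zbs) = \exp\{\xbs\zbs^{\trasp} - \tfrac{t}{2}\,\zbs\Sigma\zbs^{\trasp}\}.
\]

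On the Hermite side, I would derive from the Rodrigues formula the classical generating function $\sum_{\vbs} H_{\vbs}(\xbs,\Sigma)\zbs^{\vbs}/\vbs! = \exp\{\zbs\xbs^{\trasp} - \tfrac12\zbs\Sigma\zbs^{\trasp}\}$, and hence, for the orthogonal and time-indexed family obtained through the substitutions $\xbs \mapsto \xbs\Sigma^{-1}$, $\Sigma \mapsto \Sigma^{-1}$ defining $\tilde{H}$ and the heat-type time scaling,
\[
\sum_{\vbs} \tilde{H}_{\vbs}^{(t)}(\xbs,\Sigma)\frac{\zbs^{\vbs}}{\vbs!} = \exp\{\zbs\Sigma^{-1}\xbs^{\trasp} - \tfrac{t}{2}\,\zbs\Sigma^{-1}\zbs^{\trasp}\}.
\]
These two displays are matched by the invertible linear change of the dummy variable $\wbs = \zbs\Sigma^{-1}$: substituting it into the $Q$-generating function reproduces the $\tilde{H}^{(t)}$-generating function verbatim, because $\xbs\wbs^{\trasp} = \zbs\Sigma^{-1}\xbs^{\trasp}$ and $\wbs\Sigma\wbs^{\trasp} = \zbs\Sigma^{-1}\zbs^{\trasp}$, using $\Sigma = C C^{\trasp}$ and the symmetry of $\Sigma^{-1}$. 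Expanding $(\zbs\Sigma^{-1})^{\kbs}$ in powers of $\zbs$ and comparing the coefficients of $\zbs^{\vbs}/\vbs!$ then expresses $\tilde{H}_{\vbs}^{(t)}(\xbs,\Sigma)$ as a finite sum $\sum_{|\kbs|=|\vbs|} c_{\vbs,\kbs}(\Sigma)\,Q_{\kbs}(\xbs,t)$, the finiteness being forced by homogeneity in $\zbs$.

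The conclusion is then immediate from the linearity of (\ref{def_tsh_multi}) noted above. The main obstacle is the purely computational step of pinning down the $\tilde{H}^{(t)}$ generating function: one must carry the Rodrigues formula correctly through the defining substitutions $\xbs \mapsto \xbs\Sigma^{-1}$, $\Sigma \mapsto \Sigma^{-1}$ and the time parameter, and then keep the matrix bookkeeping with $\Sigma$, $\Sigma^{-1}$ and $\Sigma = C C^{\trasp}$ straight so that the substitution $\wbs = \zbs\Sigma^{-1}$ cleanly identifies the two series. Once the two generating functions agree under this substitution, the $t$-independence of the transition coefficients $c_{\vbs,\kbs}(\Sigma)$ — precisely what licenses the appeal to Theorem \ref{UTSH2_multi} — follows for free from the homogeneity of $(\zbs\Sigma^{-1})^{\kbs}$ in $\zbs$.
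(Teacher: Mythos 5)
Your overall strategy is the same as the paper's: compute the exponential generating function of $\{\tilde{H}_{\vbs}^{(t)}(\xbs,\Sigma)\}$, recognize it as the generating function of $E[(\xbs - t\punt\mubs)^{\vbs}]$ for $\mubs = \beta\punt(\debs C^{\trasp})$, and invoke Theorem \ref{UTSH2_multi}. The one real problem is that your Rodrigues computation has the two Hermite families swapped. From $H_{\vbs}(\xbs,\Sigma)=(-1)^{|\vbs|}D_{\xbs}^{(\vbs)}\phi/\phi$ with $\phi\propto\exp\{-\tfrac12\xbs\Sigma^{-1}\xbs^{\trasp}\}$, expanding $\phi(\xbs-\zbs)/\phi(\xbs)$ gives $\sum_{\vbs}H_{\vbs}(\xbs,\Sigma)\,\zbs^{\vbs}/\vbs!=\exp\{\xbs\Sigma^{-1}\zbs^{\trasp}-\tfrac12\zbs\Sigma^{-1}\zbs^{\trasp}\}$: the $\Sigma^{-1}$'s are already present at this stage. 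The substitution $\xbs\mapsto\xbs\Sigma^{-1}$, $\Sigma\mapsto\Sigma^{-1}$ defining $\tilde{H}$ then removes them, so $\tilde{H}_{\vbs}^{(t)}(\xbs,\Sigma)$ has generating function $\exp\{\xbs\zbs^{\trasp}-\tfrac{t}{2}\zbs\Sigma\zbs^{\trasp}\}$, which is \emph{exactly} the generating function you computed for $Q_{\vbs}(\xbs,t)$; the identification $\tilde{H}_{\vbs}^{(t)}(\xbs,\Sigma)=E[(\xbs-t\punt\beta\punt(\debs C^{\trasp}))^{\vbs}]$ is term-by-term and no change of dummy variable or linear recombination is needed. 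The formula you assign to $\tilde{H}^{(t)}$, namely $\exp\{\zbs\Sigma^{-1}\xbs^{\trasp}-\tfrac{t}{2}\zbs\Sigma^{-1}\zbs^{\trasp}\}$, is in fact the generating function of the un-tilded family (you effectively applied the inversion twice). The slip happens to be harmless for the final conclusion, because the family you actually analyze differs from the correct one by a $t$-independent invertible linear substitution in $\zbs$, and, as you rightly observe, $t$-independent linear combinations of the $Q_{\kbs}$ are again time-space harmonic; but the intermediate generating-function identity you state for $\tilde{H}^{(t)}$ is not the correct one, and the extra $\wbs=\zbs\Sigma^{-1}$ layer should be deleted rather than carried out.
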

\begin{proof}
The moment generating function of $\{\tilde{H}_{\vbs}^{(t)} (\xbs, \Sigma)\}_{t \geq 0}$ is \cite{dibruno}
$$
1 + \sum_{k \geq 1} \sum_{|\vbs| = k} \tilde{H}_{\vbs}^{(t)} (\xbs, \Sigma) \frac{\xbs^{\vbs}}{\vbs!} = \exp\left\{ \xbs \zbs^{\trasp} 
- \frac{t}{2} \zbs \Sigma \zbs^{\trasp} \right\}.
$$
The result follows since $\exp\left\{ \xbs \zbs^{\trasp}\right\} = f(\xbs, \zbs)$ and 
\begin{align*}
f(-t \punt \beta \punt (\debs C^{\trasp}), \zbs) = \exp\left\{\left[-\frac{t}{2}\zbs \Sigma \zbs^{\trasp}\right]\right\},
\end{align*}
so that $\tilde{H}_{\vbs}^{(t)}(\xbs, \Sigma) = E[(\xbs - t \punt \beta \punt (\debs C^{\trasp}))^{\vbs}].$
\end{proof} 
From Corollary \ref{zero} we also have $E[\tilde{H}^{(t)}_{\vbs}(t {\boldsymbol .} \beta \punt (\deltabs C^{T}), \, \Sigma)]=0.$
\begin{cor} 
$\tilde{H}_{\vbs}^{(t)}(\xbs, \Sigma) = E[(\xbs -1 \punt \beta \punt [\debs (t^{1/2} C^{\trasp})])^{\vbs}].$
\end{cor}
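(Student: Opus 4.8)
The plan is to reduce the statement to the expression for $\tilde{H}_{\vbs}^{(t)}(\xbs, \Sigma)$ already obtained at the end of the proof of Theorem \ref{Her}, namely $\tilde{H}_{\vbs}^{(t)}(\xbs, \Sigma) = E[(\xbs - t \punt \beta \punt (\debs C^{\trasp}))^{\vbs}]$. Since the right-hand side of the claimed identity has exactly the same shape, but with the auxiliary umbra $t \punt \beta \punt (\debs C^{\trasp})$ replaced by $1 \punt \beta \punt [\debs (t^{1/2} C^{\trasp})]$, it suffices to prove the similarity
$$t \punt \beta \punt (\debs C^{\trasp}) \equiv 1 \punt \beta \punt [\debs (t^{1/2} C^{\trasp})],$$
because the polynomial $E[(\xbs - \boeta)^{\vbs}]$ depends on the $d$-tuple $\boeta$ only through its multivariate moments. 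Establishing this similarity is the whole content of the corollary.

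To prove the similarity I would compare generating functions. First I would compute the generating function of the scaled $d$-tuple $\debs (t^{1/2} C^{\trasp})$. Right-multiplication of an umbral $d$-tuple by a matrix $A$ transforms its generating function through the substitution $\zbs \mapsto \zbs A^{\trasp}$, i.e. $f(\debs A, \zbs) = f(\debs, \zbs A^{\trasp})$; this is the same rule already used in the Hermite paragraph to pass from $f(\debs,\zbs) = 1 + \zbs\zbs^{\trasp}/2$ to $f(\debs C^{\trasp},\zbs) = 1 + \zbs\Sigma\zbs^{\trasp}/2$. Applying it with $A = t^{1/2}C^{\trasp}$, so that $A^{\trasp} = t^{1/2}C$, yields
$$f(\debs (t^{1/2}C^{\trasp}),\zbs) = f(\debs, t^{1/2}\zbs C) = 1 + \frac{t}{2}\,\zbs C C^{\trasp}\zbs^{\trasp} = 1 + \frac{t}{2}\,\zbs\Sigma\zbs^{\trasp},$$
using $\Sigma = C C^{\trasp}$.

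Next I would feed this into the Bell-composition generating function $f(s \punt \beta \punt \alpha, \zbs) = \exp\{s[f(\alpha,\zbs) - 1]\}$, the multivariate analogue of (\ref{(genfun1)}), specialized at parameter $s = 1$. This gives
$$f\big(1 \punt \beta \punt [\debs (t^{1/2}C^{\trasp})], \zbs\big) = \exp\left\{\frac{t}{2}\,\zbs\Sigma\zbs^{\trasp}\right\}.$$
On the other hand, the same formula at parameter $t$ together with $f(\debs C^{\trasp},\zbs) = 1 + \zbs\Sigma\zbs^{\trasp}/2$ gives $f(t \punt \beta \punt (\debs C^{\trasp}),\zbs) = \exp\{\tfrac{t}{2}\zbs\Sigma\zbs^{\trasp}\}$. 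The two generating functions coincide, hence the two auxiliary umbrae are similar, and substituting into the Theorem \ref{Her} formula yields the corollary.

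The only genuinely delicate point is the scaling rule $f(\debs A,\zbs) = f(\debs,\zbs A^{\trasp})$: one must verify that right-multiplication of the umbral $d$-tuple by $A$ acts on its exponential generating function precisely by the linear substitution $\zbs \mapsto \zbs A^{\trasp}$, which is the $d$-variate counterpart of rescaling a univariate umbra and is the step where the factor $t$ is correctly absorbed into the quadratic form $\zbs\Sigma\zbs^{\trasp}$. Everything else is a routine matching of exponential generating functions.
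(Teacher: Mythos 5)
Your proposal is correct and follows essentially the same route as the paper: both arguments reduce the corollary to matching exponential generating functions via the scaling rule $f(\debs A,\zbs)=f(\debs,\zbs A^{\trasp})$ and the composition formula $f(\beta\punt\gammabs,\zbs)=\exp\{f(\gammabs,\zbs)-1\}$, so that $t\punt\beta\punt(\debs C^{\trasp})$ and $1\punt\beta\punt[\debs(t^{1/2}C^{\trasp})]$ share the generating function $\exp\{\tfrac{t}{2}\zbs\Sigma\zbs^{\trasp}\}$. The only cosmetic difference is that the paper verifies the identity directly on the inverse umbra $-1\punt\beta\punt[\debs(t^{1/2}C^{\trasp})]$, whereas you establish similarity of the uninverted umbrae and then note that $E[(\xbs-\boldsymbol{\eta})^{\vbs}]$ depends only on the moments of $\boldsymbol{\eta}$; these are equivalent.
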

\begin{proof}
The result follows from Theorem \ref{Her}, since 
\begin{align*}
\exp\left\{- \frac{t}{2} \zbs \Sigma \zbs^{\trasp} \right\} & = \exp\left\{- \frac{t}{2} \zbs C C^{\trasp} \zbs^{\trasp} \right\}  = \left(\exp\left\{ f[\debs, \zbs(t^{1/2} C)] - 1 \right\}\right)^{\scriptscriptstyle{-1}} \\ 
& = \left(f(\beta \punt [\debs (t^{1/2} C^{\trasp})], \zbs) \right)^{\scriptscriptstyle{-1}} = f(-1 \punt \beta \punt [\debs (t^{1/2} C^{\trasp})], \zbs).
\end{align*}
\end{proof}
%
%------------------------------------------------------------------------------------
\paragraph{Multivariate Bernoulli polynomials.}
%----------------------------------------------------------------------------------
Multivariate Bernoulli polynomials are umbrally represented by umbral polynomials involving multivariate L\'evy processes 
\cite{BE}. Indeed, let $\iota$ be the Bernoulli umbra, that is the umbra whose moments are the Bernoulli numbers.
The \emph{multivariate Bernoulli umbra} $\bio$ is the $d$-tuple $(\iota, \ldots, \iota).$ 
For all $\vbs \in \mathbb{N}_0^d$ and $t \geq 0,$ the multivariate Bernoulli polynomial of 
order $\vbs$ is 
$$B_{\vbs}^{(t)}(\xbs)=E[(\xbs + t \punt \bio)^{\vbs}].$$ 
\begin{thm}
The family $\{B_{\vbs}^{(t)}(\xbs)\}_{t \geq 0}$ of multivariate Bernoulli polynomials is time-space harmonic with 
respect to the family  $\{- t \punt \bio\}_{t \geq 0}.$
\end{thm}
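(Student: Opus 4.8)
The plan is to recognize $B_{\vbs}^{(t)}(\xbs)$ as the canonical time-space harmonic polynomial $Q_{\vbs}(\xbs,t)$ of Theorem \ref{UTSH2_multi} attached to the driving family $\{-t\punt\bio\}_{t\geq 0}$, so that the assertion follows by direct invocation with essentially no extra computation. Concretely, I would show that the defining umbral polynomial of the multivariate Bernoulli family, $B_{\vbs}^{(t)}(\xbs)=E[(\xbs+t\punt\bio)^{\vbs}]$, already has the shape $E[(\xbs-t\punt\mubs)^{\vbs}]$ prescribed in \eqref{(tshumbral_multi)}, for the right choice of generator $\mubs$.

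First I would confirm that $\{-t\punt\bio\}_{t\geq 0}$ is a genuine symbolic multivariate L\'evy process, i.e. that it is of the form $\{t\punt\nubs\}_{t\geq 0}$ covered by Theorem \ref{primo}. Taking $\nubs=-1\punt\bio$, the inverse of the multivariate Bernoulli umbra, the generating-function chain $f(-t\punt\bio,\zbs)=f(t\punt\bio,\zbs)^{-1}=f(\bio,\zbs)^{-t}=f(\nubs,\zbs)^{t}=f(t\punt\nubs,\zbs)$ yields $-t\punt\bio\equiv t\punt\nubs$, so the stated family is indeed a symbolic L\'evy process with generator $\nubs$.

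Next I would apply Theorem \ref{UTSH2_multi} verbatim to this process: the polynomials $Q_{\vbs}(\xbs,t)=E[(\xbs-t\punt\nubs)^{\vbs}]$ are time-space harmonic with respect to $\{t\punt\nubs\}_{t\geq 0}=\{-t\punt\bio\}_{t\geq 0}$. It then remains only to identify $Q_{\vbs}$ with $B_{\vbs}^{(t)}$. In the convention of \eqref{(tshumbral_multi)}, the symbol $-t\punt\nubs$ denotes the inverse of $t\punt\nubs\equiv-t\punt\bio$; since umbral inversion is an involution, the inverse of $-t\punt\bio$ is $t\punt\bio$, whence $\xbs-t\punt\nubs\equiv\xbs+t\punt\bio$ and $Q_{\vbs}(\xbs,t)=E[(\xbs+t\punt\bio)^{\vbs}]=B_{\vbs}^{(t)}(\xbs)$, which is the claim.

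The one point demanding care is the bookkeeping of inverse umbrae: one must keep $t\punt(-1\punt\bio)$, the inverse $-t\punt\bio$ of $t\punt\bio$, and the inverse of $-t\punt\bio$ carefully apart, checking each identification at the level of generating functions as above. Past this purely notational hurdle the proof is a transcription of Theorem \ref{UTSH2_multi}, and Corollary \ref{zero} additionally gives $E[B_{\vbs}^{(t)}(-t\punt\bio)]=0$ for free.
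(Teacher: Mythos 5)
Your proof is correct and follows exactly the route the paper intends: the paper states this theorem without proof, leaving it as an immediate application of Theorem \ref{UTSH2_multi} to the process $\{t\punt(-1\punt\bio)\}_{t\geq 0}\equiv\{-t\punt\bio\}_{t\geq 0}$, which is precisely your argument. Your careful bookkeeping of the inverse umbrae (checking at the level of generating functions that the inverse of $-t\punt\bio$ is $t\punt\bio$, so that $Q_{\vbs}(\xbs,t)=E[(\xbs+t\punt\bio)^{\vbs}]=B_{\vbs}^{(t)}(\xbs)$) supplies the details the paper omits.
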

In particular, we have $E[\mathcal{B}_{\vbs}^{(t)}(- t \punt \bio)] = E\left\{ \mathcal{B}_{\vbs}^{(t)}[t \punt (-1 \punt \bio)] \right\} = 0.$
The auxiliary umbra $- t \punt \bio$ is the symbolic version of a L\'evy process. 
Indeed, $-1 \punt \bio$ is the umbral counterpart of a $d$-tuple identically distributed to 
$(U,\ldots, U),$ where $U$ is a uniform r.v. on the interval $(0,1).$ 
%
%------------------------------------------------------------------------------------
\paragraph{Multivariate Euler polynomials.}
%----------------------------------------------------------------------------------
Multivariate Euler polynomials are umbrally represented by umbral polynomials involving multivariate L\'evy processes 
\cite{BE}. Let $\eta$ be the Euler umbra, that is the umbra whose moments are the Euler numbers. Then 
the \emph{multivariate Euler umbra} $\boeta$ is the $d$-tuple $(\eta, \ldots, \eta).$ For all $\vbs \in \mathbb{N}_0^d$ and 
$t \in \Real,$ the multivariate Euler polynomial of order $\vbs$ is 
$$\mathcal{E}_{\vbs}^{(t)}(\xbs)=E\left\{ \left(\xbs + \frac{1}{2} [ t \punt (\boeta - \boldsymbol{u})] \right)^{\vbs}\right\}$$ 
with $\boldsymbol{u}=(u, \ldots, u)$ a vector of unity umbrae and $\boeta$ the multivariate Euler umbra.
\begin{thm}
The family $\{\mathcal{E}_{\vbs}^{(t)}(\xbs)\}_{t \geq 0}$ of multivariate Euler polynomials is time-space harmonic with 
respect to the family  $\{\frac{1}{2} [t \punt (\boldsymbol{u} - 1 \punt \boeta)]\}_{t \geq 0}.$
\end{thm}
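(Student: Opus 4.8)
The plan is to recognize the multivariate Euler polynomials as a special case of the canonical family $Q_{\vbs}(\xbs,t) = E[(\xbs - t \punt \mubs)^{\vbs}]$ of Theorem \ref{UTSH2_multi} and then quote that theorem. Setting $\mubs = \frac{1}{2}(\boldsymbol{u} - 1 \punt \boeta)$, the whole argument reduces to the single umbral identity
\begin{equation*}
-t \punt \mubs \equiv \tfrac{1}{2}\left[ t \punt (\boeta - \boldsymbol{u}) \right],
\end{equation*}
after which $Q_{\vbs}(\xbs,t) = E\{(\xbs + \frac{1}{2}[t \punt (\boeta - \boldsymbol{u})])^{\vbs}\} = \mathcal{E}_{\vbs}^{(t)}(\xbs)$ matches the definition verbatim, and Theorem \ref{UTSH2_multi} gives time-space harmonicity with respect to $\{t \punt \mubs\}_{t \geq 0} = \{\frac{1}{2}[t \punt (\boldsymbol{u} - 1 \punt \boeta)]\}_{t \geq 0}$.

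Everything is most transparent at the level of generating functions. First I would record that scalar multiplication commutes with the dot-product: since $f(c\,\nubs, \zbs) = f(\nubs, c\zbs)$, one gets $f(t \punt (c\,\nubs), \zbs) = f(\nubs, c\zbs)^t = f(c\,(t \punt \nubs), \zbs)$, hence $t \punt (c\,\nubs) \equiv c\,(t \punt \nubs)$. In particular $\frac{1}{2}[t \punt (\boldsymbol{u} - 1 \punt \boeta)] = t \punt \mubs$, so the family in the statement is indeed of the form required by Theorem \ref{UTSH2_multi}.

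Next I would compute the inverse umbra. From $t \punt \mubs + (-t \punt \mubs) \equiv \varbs$ and $f(\varbs,\zbs)=1$ one has $f(-t \punt \mubs, \zbs) = f(t \punt \mubs, \zbs)^{-1}$, so the dot-inverse commutes with both the scalar $\frac{1}{2}$ and the dot-product $t \punt$, giving $-t \punt \mubs \equiv \frac{1}{2}[t \punt (-1 \punt (\boldsymbol{u} - 1 \punt \boeta))]$. The inverse then distributes over the sum of uncorrelated umbrae, $-1 \punt (\boldsymbol{u} - 1 \punt \boeta) \equiv (-1 \punt \boldsymbol{u}) + \boeta$, because $f(\boldsymbol{u} - 1 \punt \boeta,\zbs) = f(\boldsymbol{u},\zbs)/f(\boeta,\zbs)$ inverts to $f(\boeta,\zbs)/f(\boldsymbol{u},\zbs)$; using $-1 \punt (-1 \punt \boeta) \equiv \boeta$ and the coincidence $-1 \punt \boldsymbol{u} \equiv -\boldsymbol{u}$ (both have componentwise generating function $e^{-z_i}$) yields $-1 \punt (\boldsymbol{u} - 1 \punt \boeta) \equiv \boeta - \boldsymbol{u}$ and hence the displayed identity.

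The reduction to $Q_{\vbs}$ is routine; the only genuine bookkeeping is the algebra of $-t \punt \mubs$, namely checking that the scalar $\frac{1}{2}$, the dot-inverse $-1 \punt$ and the dot-product $t \punt$ commute in the needed order and that $-1 \punt (\boldsymbol{u} - 1 \punt \boeta) \equiv \boeta - \boldsymbol{u}$. This mirrors the Bernoulli theorem proved just above, which is the same scheme with $t \punt \mubs = -t \punt \bio$ and no scaling, and I would present the Euler case in the same telegraphic style.
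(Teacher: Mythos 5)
Your argument is correct, and it is exactly the route the paper intends: the paper states this theorem without proof, leaving implicit precisely the reduction you carry out, namely recognizing $\mathcal{E}_{\vbs}^{(t)}(\xbs)$ as $Q_{\vbs}(\xbs,t)=E[(\xbs-t\punt\mubs)^{\vbs}]$ for $\mubs=\tfrac{1}{2}(\boldsymbol{u}-1\punt\boeta)$ and invoking Theorem \ref{UTSH2_multi}. The generating-function verifications you supply (commutation of scalar multiplication with the dot-product, $-1\punt(\boldsymbol{u}-1\punt\boeta)\equiv\boeta-\boldsymbol{u}$) are sound and in fact make explicit the bookkeeping the paper omits.
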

The auxiliary umbra $\frac{1}{2} [t \punt (\boldsymbol{u} - 1 \punt \boeta)]$ is the symbolic version of a L\'evy process. 
Indeed, $\frac{1}{2} (\boldsymbol{u} - 1 \punt \boeta)$ is the umbral counterpart of a $d$-tuple identically distributed to 
$(Y,\ldots, Y),$ where $Y$ is a Bernoulli r.v. of parameter $1/2.$  
%------------------------------------------------------------------------------------------------------------------
%bibliography
%------------------------------------------------------------------------------------------------------------------


\begin{thebibliography}{99}

\bibitem{brown}
J.W. BROWN, On multivariable Sheffer sequences, \emph{J. Math. Anal. Appl.} \textbf{69}, (1979) 398--410. 

\bibitem{chhikara}
R.S  CHHIKARA, J. LEROY FOLKS, \emph{The inverse Gaussian distribution:theory, methodology, and applications}, Marcel Deker Inc., 
New York and Besel, 1989.

\bibitem{cuchiero}
C. CUCHIERO, M.  KELLER-RESSEL, J. TEICHMANN, Polynomial processes and their applications to mathematical finance,
\emph{Finance Stoch.}, \textbf{16}, (2012), 711--740.  
 
\bibitem{comtet}
L. COMTET {\em Advanced combinatorics: the art of finite and infinite expansions}, D. Reidel Publishing Company, 1974.

\bibitem{dinardotsh}
E. DI NARDO {\em On a representation of time-space harmonic polynomials via symbolic L\'evy processe}, \emph{Scientiae Mathematicae Japonicae} \textbf{76}(1).

\bibitem{Bernoulli}
E. DI NARDO, G. GUARINO, D. SENATO, A unifying framework for k-statistics, polykays and their multivariate generalizations,   
\emph{Bernoulli}, \textbf{14}, (2008),  440--468.

\bibitem{dibruno}
E. DI NARDO, G. GUARINO, D. SENATO, A new algorithm for computing the multivariate Fa\'a-di Bruno's formula, \emph{Appl. Math. Comp.},  
\textbf{217}, (2011), 6286--6295.

\bibitem{DinardoMc}
E. DI NARDO, P. MCCULLAGH, D. SENATO, Natural spectral k-statistics, \emph{Ann. Stat.}, (2013) in press. 

\bibitem{noncentered}
E. DI NARDO, I. OLIVA, On some applications of a symbolic representation of non-centered L\'evy processes, \emph{Comm. Statist. 
Theory Methods} (2011), in press.

\bibitem{multilevy}
E. DI NARDO, I. OLIVA, On a symbolic version of multivariate L\'evy processes, in \emph{AIP Conf. Proc.}, 
\textbf{1389}, (2011), ISBN: 978-0-7354-0956-9.

\bibitem{TSH}
E. DI NARDO, I. OLIVA, A new family of time-space harmonic polynomials with respect to L\'evy processes, \emph{Ann. 
Mat. Pura Appl.} (28 February 2012, on line) 10.1007/s10231-012-0252-3.

\bibitem{BE}
E. DI NARDO, I. OLIVA, Multivariate Bernoulli and Euler polynomials via L\'evy processes, \emph{Appl. Math. Letters.} 
\textbf{25}, (2012), 1179--1184.

\bibitem{Dinsen}
E. DI NARDO, D. SENATO, Umbral nature of the Poisson random variables, in \emph{Algebraic Combinatorics and 
Computer science: a tribute to Gian-Carlo Rota}, (Crapo, H. Senato, D. eds.), Springer-Verlag, 2001.

\bibitem{lawi}
S. LAWI, Hermite and Laguerre polynomials and matrix-valued stochastic processes, \emph{Elect. Comm. in Probab.}  \textbf{13}, (2008),
67--84.

\bibitem{levy1}
P. L\'EVY, Th\'eorie des erreurs. La loi de Gauss et les lois exceptionelles, \emph{Bull. Soc. Math. France} \textbf{52}, (1924), 
49--85.

\bibitem{levy2}
P. L\'EVY, \emph{Calcul del Probabilit\'es}, Gauthier-Villars, Paris, 1925.

\bibitem{mehta}
M.L. MEHTA, \emph{Random matrices,} Academic Press, London (2nd ed.) 1991. 

\bibitem{PDE}
A. PASCUCCI, \emph{PDE and Martingale Methods in Option Pricing,} Bocconi and Springer Series, 2011.

\bibitem{SIAM}
G.-C. ROTA, B.D. TAYLOR, The classical umbral calculus, \emph{SIAM J. Math. Anal.}, {\bf 2}, (1994), no.25, 694--711.

\bibitem{sato}
K.-I. SATO, \emph{L\'evy processes and infinitely divisible distributions}, Cambridge University Press, 1999.

\bibitem{schoutens}
W. SCHOUTENS, \emph{Stochastic Processes and Orthogonal Polynomials}, Lecture Notes in Statistics, vol.146, Springer-Verlag, 2000.

\bibitem{schoutens2}
W. SCHOUTENS, \emph{L\'evy Processes in Finance: Pricing Financial Derivatives}, Wiley Series in Probability and Statistics, 2003.

\bibitem{solè}
J.L. SOLE', F. UTZET, \emph{On the orthogonal polynomials associated with a L\'evy process}, Ann. Probab., \textbf{36}, (2008), no.2, 765--795. 

\bibitem{tweedie}
M.C.K. TWEEDIE, Inverse statistical variates, \emph{Nature} (1945), 155--453.

\end{thebibliography}
\end{document}